\documentclass{elsarticle}

\usepackage{amsmath, amsthm, amsfonts, amssymb, hyperref,tikz}
\usepackage{graphicx}

\newtheorem{theorem}{Theorem}[section]
\newtheorem{conjecture}[theorem]{Conjecture}
\newtheorem{proposition}[theorem]{Proposition}
\newtheorem{lemma}[theorem]{Lemma}
\newtheorem{corollary}[theorem]{Corollary}
\newtheorem{question}[theorem]{Question}

{\theoremstyle{definition}
\newtheorem{definition}[theorem]{Definition}
\newtheorem*{definition*}{Definition}
\newtheorem{remark}[theorem]{Remark}
\newtheorem{example}[theorem]{Example}
}

\numberwithin{equation}{section}
\numberwithin{figure}{section}

\setcounter{MaxMatrixCols}{20}

\newcommand{\bigP}[1]{\mathcal{P}_{#1}}
\newcommand{\leqs}{\leq_s}

\newcommand{\les}{<_s}
\newcommand{\leqsupp}{\leq_{\mathit{supp}}}
\newcommand{\lesupp}{<_{\mathit{supp}}}
\newcommand{\Supp}[1]{\mathit{Supp}_{#1}}
\newcommand{\eqsupp}[1]{\lfloor#1\rfloor}
\newcommand{\flip}[1]{#1^\circ}
\newcommand{\rows}[1]{\mathrm{rows}(#1)}
\newcommand{\cols}[1]{\mathrm{cols}(#1)}
\newcommand{\domleq}{\preceq}   
\newcommand{\domle}{\prec}
\newcommand{\supp}[1]{\mathrm{supp}(#1)}

\newcommand{\rib}[1]{\langle{#1}\rangle}
\newcommand{\ribschur}[1]{r_{#1}}

\begin{document}

\begin{frontmatter}

\title{Maximal supports and Schur-positivity among connected skew shapes}

\author{Peter R. W. McNamara\fnref{peter}}
\ead{peter.mcnamara@bucknell.edu}
\ead[url]{http://www.facstaff.bucknell.edu/pm040}

\author{Stephanie van Willigenburg\fnref{steph,nserc}}
\ead{steph@math.ubc.ca}
\ead[url]{http://www.math.ubc.ca/~steph}

\fntext[nserc]{The second author was supported in part by the National Sciences and Engineering Research Council of Canada.}

\address[peter]{Department of Mathematics, Bucknell University, Lewisburg, PA 17837, USA}

\address[steph]{Department of Mathematics, University of British Columbia, Vancouver, BC V6T 1Z2, Canada}

\begin{abstract}
The Schur-positivity order on skew shapes is defined by $B \leq A$ if the difference $s_A - s_B$ is Schur-positive.  It is an open problem to determine those connected skew shapes that are maximal with respect to this ordering.  A strong necessary condition for the Schur-positivity of $s_A- s_B$ is that the support of $B$ is contained in that of $A$, where the support of $B$ is defined to be the set of partitions $\lambda$ for which $s_\lambda$ appears in the Schur expansion of $s_B$.  We show that to determine the maximal connected skew shapes in the Schur-positivity order and this support containment order, it suffices to consider a special class of ribbon shapes.  We explicitly determine the support for these ribbon shapes, thereby determining the maximal connected skew shapes in the support containment order. 
\end{abstract}

\begin{keyword}
Symmetric function \sep Schur-positive \sep support \sep skew shape \sep ribbon

\MSC[2010] primary 05E05; secondary 05E10 \sep 06A06 \sep 20C30

\end{keyword}

\end{frontmatter}

\section{Introduction}\label{sec:intro} 

Among the familiar bases for the ring of symmetric functions, the Schur functions are commonly considered to be the most important basis.  Besides their elegant combinatorial definition, the significance of Schur functions in algebraic combinatorics stems from their appearance in other areas of mathematics.  More specifically, Schur functions arise in the representation theory of the
symmetric group and of the general and special linear groups.  Via Littlewood--Richardson coefficients,  Schur functions are also intimately tied to Schubert classes, which arise in algebraic geometry when studying the cohomology ring of the Grassmannian.  Furthermore, Littlewood--Richardson coefficients answer questions about eigenvalues of Hermitian matrices.  For more information on these and other
connections see, for example, \cite{Ful97} and \cite{Ful00}.

The aforementioned Littlewood-Richardson coefficients are nonnegative integers, and in the ring of symmetric functions they arise in two contexts: as the structure constants in the expansion of the product of two Schur functions $s_\lambda s_\mu$ as a linear combination of Schur functions, and as the structure constants in the expansion of a skew Schur function $s_{\lambda/\mu}$ as a linear combination of Schur functions. Consequently, these expansions give rise to the notion of a \emph{Schur-positive} function, i.e., when expanded as a linear combination of Schur functions, all of the coefficients are nonnegative integers.  Schur-positive functions have a particular representation-theoretic significance: if a homogeneous symmetric function $f$ of degree $N$ is Schur-positive, then it arises as the Frobenius image of some representation of the symmetric group $S_N$.  Moreover, $f(x_1, \ldots, x_n)$ is the character of a polynomial representation of the general linear group $\mathit{GL}(n, \mathbb{C})$.  Noting that $s_\lambda s_\mu$ is just a special type of skew Schur function \cite[p.~339]{ec2}, we will restrict our attention to skew Schur functions $s_A$, where $A$ is a skew shape.  Roughly speaking, our goal is to determine those $s_A$ that are the ``most'' Schur-positive.

Working towards making this goal more precise, 
one might next ask when expressions of the form
$s_A - s_B$
are Schur-positive, where $B$ is a skew shape.
Such questions have been the subject of much recent work, such as
\cite{BBR06, FFLP05, Kir04, KWvW08, LLT97, LPP07, McN08, McVw09b, Oko97}.  It is well known 
that these questions are
currently intractable when stated in anything close to full generality.  
A weaker condition than $s_A-s_B$ being Schur-positive is that the support of $s_B$ is contained in the support of $s_A$, 
where the support of $s_A$ is defined to be the set of those $\lambda$ for which $s_\lambda$ appears with nonzero coefficient when we expand $s_A$ as a linear combination of Schur functions.  
Support containment for skew Schur functions is directly relevant to the results of \cite{DoPy07, FFLP05, McN08}; let us give the flavor of just one beautiful result about the support of skew Schur functions.  There exist Hermitian matrices $A$, $B$ and $C=A+B$, with eigenvalue sets 
$\mu$, $\nu$ and $\lambda$ respectively, if and only if $\nu$ is in the support of $s_{\lambda/\mu}$.  (See the survey \cite{Ful00} and the references therein.)

Putting these questions in the following general setting will help put our work in context.  
We could define a reflexive and transitive binary relation on skew Schur functions by
saying that $B$ is related to $A$ if $s_A - s_B$ is Schur-positive.  To make this 
relation a partial order, we need to consider those skew shapes that yield the same
skew Schur function to be equivalent; see the sequence \cite{BTvW06, RSvW07, McvW09a} as well as \cite{vWi05,Gut09} for a study
of these equivalences.  Having done this, let us say that $[B] \leqs [A]$ if 
$s_A - s_B$ is Schur-positive, where $[A]$ denotes the equivalence class of $A$.  Since $s_A$ is homogeneous of degree $N$, where $N$ is the number of boxes of $A$,  $[A]$ and 
$[B]$ will be incomparable unless $A$ and $B$ have the same number $N$ of boxes, and we
let $\bigP{N}$ denote the poset of all equivalence classes $[A]$ such that $A$ has $N$ boxes.  Restricting
to skew shapes with 4 boxes, we get the poset $\bigP{4}$ shown in Figure~\ref{fig:p4}.
\begin{figure}[htbp]
\begin{center}
\includegraphics[width=0.5\textwidth]{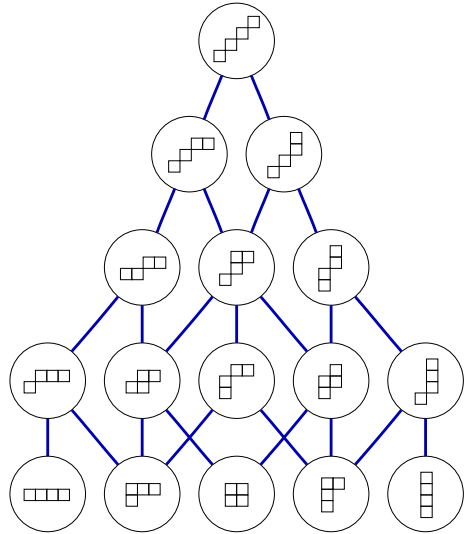}
\caption{$\bigP{4}$: All skew shapes with 4 boxes under the Schur-positivity order, up to equivalence. One can determine that $\bigP{N}$ is not graded when $N \geq 5$, and is not a join-semilattice when $N \geq 6$.}
\label{fig:p4}
\end{center}
\end{figure}

In a similar way, we can define a poset by $\eqsupp{B} \leqsupp \eqsupp{A}$ if the support of $B$ is contained in that of $A$, where $\eqsupp{A}$ denotes the support equivalence class of $A$.  We let $\Supp{N}$ denote the poset of all equivalence classes $\eqsupp{A}$ such that $A$ has $N$ boxes.  As a simple example, $\Supp{4}$ is identical to $\bigP{4}$.  When $N=5$, things become more interesting: let 
$A=(3,3,2,1)/(2,1,1)$, abbreviated as $A = 3321/211$, and $B = 3311/21$.  We see that 
\[
s_A = s_{32} + s_{311} + 2s_{221} + s_{2111}   \mbox{\ \ \ and\ \ \ } s_B = s_{32} + s_{311} + s_{221} + s_{2111}\  .
\]
So $[B] \les [A]$ in $\bigP{5}$, while $\eqsupp{B}=\eqsupp{A}$ in $\Supp{5}$.  
Our overarching goal when studying questions of Schur-positivity and support containment for skew Schur functions is to understand the posets $\bigP{N}$ and $\Supp{N}$.  

Despite their fundamental nature, it is easy to ask questions about $\bigP{N}$ and $\Supp{N}$ that sound simple but are not easy.  We will be interested in the maximal elements of these posets.  Well, in fact, it is easy to check that each of these posets has a unique maximal element, namely, the skew shape that consists of $N$ connected components, each of size 1, as is the case for $\bigP{4}$ in Figure~\ref{fig:p4}.  Instead, we will restrict our attention to connected skew shapes.  It will simplify our terminology if we make the following observation, which we will prove in Section~\ref{sec:reduction}: if two skew shapes $A$ and $B$ fall into the same equivalence class in $\bigP{N}$ or $\Supp{N}$, then $A$ and $B$ must have the same numbers of connected components, nonempty rows, and nonempty columns.  Therefore, without ambiguity, we can refer to the number of rows of an element of $\bigP{N}$ or $\Supp{N}$, or say if such an element is connected.  Our goal is to address the following two questions.

\begin{question}\label{que:schurpos} 
What are the maximal elements of the subposet of $\bigP{N}$ consisting of connected elements?
\end{question}

\begin{question}\label{que:support}
Similarly, 
what are the maximal elements of the subposet of $\Supp{N}$ consisting of connected elements?
\end{question} 

For example, we see in Figure~\ref{fig:p4} that $\bigP{4}=\Supp{4}$ has 7 connected elements, 4 of which are maximal among these connected elements.  See Figure~\ref{fig:p5connected} for the subposet of $\Supp{5}$ consisting of the connected elements, which equals the corresponding subposet for $\bigP{5}$.

Somewhat surprisingly, Question~\ref{que:schurpos} remains open.  
The following conjectural answer to Question~\ref{que:schurpos} is due to Pavlo Pylyavskyy and the first author.  It is well known \cite[Exer.~7.56(a)]{ec2} that a skew shape $A$ is equivalent in $\bigP{N}$ to its \emph{antipodal rotation} $\flip{A}$ (i.e., $\flip{A}$ is obtained from $A$ by rotating $A$ by 180$^\circ$).  

\begin{conjecture}\label{con:max} \
\renewcommand{\theenumi}{\alph{enumi}}
\begin{enumerate}
\item In the subposet of $\bigP{N}$ consisting of connected skew shapes, there are exactly  $N$ maximal elements.  More specifically, there is a unique maximal element with $l$ rows, for $l=1,\ldots,N$.  Each maximal element is an equivalence class consisting of a single skew shape $R$, along with its antipodal rotation $\flip{R}$ (if $R \neq \flip{R}$).  
\item Let $[R]$ denote the unique maximal element with $l$ rows.  To construct $R$ up to antipodal rotation, start with a grid $l$ boxes high and $N-l+1$ boxes wide and draw a line $L$ from the bottom left to the top right corner.  
Then $R$ consists of the boxes whose interior or whose top left corner point is intercepted by $L$.
\end{enumerate}
\end{conjecture}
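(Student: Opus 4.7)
The plan rests on three pillars: reduce to ribbons, identify the right ribbon in each row-count class, and establish the required Schur-positivity differences. For the first pillar I would invoke the reduction announced in the abstract and developed in Section~\ref{sec:reduction}, which should allow me to replace any maximal connected element of $\bigP{N}$ by an equivalent ribbon with the same number of rows. The problem then becomes: among ribbons with $l$ rows and $N$ boxes, which is $\leqs$-maximum?

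The ribbon $R_l$ defined by the diagonal-line construction in part~(b) is the ``most balanced'' ribbon with $l$ rows, its row lengths being as close to equal as possible and arranged in the alternating pattern dictated by the diagonal. Ribbons with $l$ rows are parametrized by their row-length compositions together with overlap data, and my plan is to impose a \emph{rebalancing} partial order on this parametrization whose unique top, up to antipodal rotation, is $R_l$: a rebalancing move picks a longer row adjacent to a shorter row and shifts one box from the long row to the short one. If each such move produces a Schur-positive difference, then iterating from any ribbon terminates at $R_l$ or $\flip{R_l}$, which are equivalent in $\bigP{N}$.

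The main obstacle is proving the rebalancing move Schur-positive; this is exactly the sort of delicate ribbon inequality the authors flag as hard in general. I would pursue three approaches in parallel. First, a direct combinatorial injection on Littlewood--Richardson fillings, perhaps via jeu-de-taquin adapted to the ribbon overlap structure, sending tableaux counted by $s_{R'}$ into tableaux counted by $s_{R_l}$ for each target shape. Second, leveraging the paper's explicit support computation for $R_l$: that result already yields support containment, so only multiplicity inequalities remain, and these can be attacked via the ribbon Murnaghan--Nakayama rule or the ribbon-tableau description of skew Littlewood--Richardson coefficients. Third, decomposing each rebalancing step into smaller operations already shown to be $\leqs$-increasing in \cite{BBR06, LLT97, LPP07, McN08}.

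Finally, uniqueness of $R_l$ within its row class and incomparability of $R_l$ with $R_{l'}$ for $l \neq l'$ should follow with little extra work: the number of rows is an invariant of $\leqs$-equivalence (by Section~\ref{sec:reduction}), and the rebalancing order has a unique antipodal-symmetric top in each row-count class, which identifies the single equivalence class $\{R_l, \flip{R_l}\}$ as the maximum with $l$ rows.
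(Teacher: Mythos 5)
The statement you are trying to prove is Conjecture~\ref{con:max}, which the paper does not prove: it is an open conjecture, verified there only computationally for $N \leq 33$, with the paper supplying just the reductions of Section~\ref{sec:reduction} (every maximal connected element of $\bigP{N}$ is an equitable ribbon, and classes with different numbers of rows are incomparable). Your first and last pillars coincide with those published reductions, but your middle pillar does not close the real gap, and in one place it is actually incorrect as stated. Your ``rebalancing move'' shifts a box from a longer row to an adjacent shorter row; among equitable ribbons adjacent rows differ by at most one, so no such move is available, and hence the maximal elements of your rebalancing order are \emph{all} equitable ribbons with the given number of rows, not the single diagonal ribbon $R_l$ of part~(b). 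The hard content of the conjecture is precisely to compare equitable ribbons having the same multiset of row lengths, e.g.\ to show $[\rib{323}] \les [\rib{233}]$ or $[\rib{23222}] \leqs [\rib{22322}]$, and these are strict or delicate inequalities that your move cannot produce. In other words, your scheme at best reproves Corollary~\ref{cor:full_reduction}; it cannot single out $R_l$.

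For the remaining step you offer three approaches ``in parallel'' without carrying out any of them, and there is strong evidence in the paper that none is routine. Remark~\ref{rem:nemeses} shows that the known box-moving results of \cite{KWvW08} together with the cell-transfer machinery of \cite{LPP07} suffice only for $N \leq 33$, and that the inequality $[\rib{23232233223232}] \leqs [\rib{22323232323232}]$ for $N = 34$ escapes all of these techniques. Your second idea is also weaker than you suggest: Theorem~\ref{thm:main}(c) gives every equitable ribbon with $l$ rows the \emph{same} (full) support, so support containment among them is automatic and carries no information about the multiplicity inequalities that Schur-positivity requires; indeed this is exactly why $\Supp{N}$ could be settled while $\bigP{N}$ could not. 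Finally, part~(a)'s claim that the maximal class contains only $R$ and $\flip{R}$ is not addressed by your argument at all (note that in the support order the analogous class can be larger, as in Example~\ref{exa:fullsupp}), so even granting your rebalancing step, uniqueness of the representative would still need a separate proof. As it stands, the proposal is a plausible strategy outline, not a proof, and its central lemma is both unproved and, in the form stated, insufficient to reach the conjectured maximal ribbon.
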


\begin{example}
According to the conjecture, the unique maximal connected elements with 3 rows in $\bigP{7}$ and $\bigP{8}$, up to rotation, are shown in Figure~\ref{fig:billiard}.  These two examples are different in nature since the diagonal line in the second example goes through internal vertices of the grid, necessitating the ``top left corner point'' phrase in Conjecture~\ref{con:max}(b). 
Although none of our proofs are affected by the differing nature of these examples, there are implications for the discussion in Subsection~\ref{sub:christoffel}.

\begin{figure}[htbp]
\begin{center}
\includegraphics[width=0.55\textwidth]{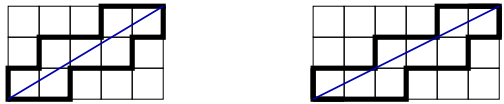}
\caption{The unique maximal connected element with 3 rows in $\bigP{7}$ (respectively $\bigP{8}$) is the equivalence class containing the skew shape outlined in bold on the left (resp.\ right), and its antipodal rotation.}
\label{fig:billiard}
\end{center}
\end{figure}
\end{example}

We use the letter $R$ because the resulting skew shape will always be a \emph{ribbon}, meaning that every pair of adjacent rows overlap in exactly one column.  Without using brute-force computation of skew Schur functions, we have verified Conjecture~\ref{con:max} for all $N \leq 33$: Remark~\ref{rem:nemeses} is a brief discussion of the ideas involved.  

In the present paper, we answer Question~\ref{que:support}.  As well as classifying the maximal connected elements of $\Supp{N}$, we can say exactly what the supports of these maximal elements are, showing that they take a particularly nice form.   The following theorem is our main result.

\begin{theorem}\label{thm:main}
\renewcommand{\theenumi}{\alph{enumi}}\
\begin{enumerate}
\item In the subposet of $\Supp{N}$ consisting of connected skew shapes, there are exactly $N$ maximal elements.  More specifically, there is a unique maximal element with $l$ rows, for $l=1,\ldots,N$.
\item For each such $l$, the corresponding maximal element is an equivalence class consisting of all those ribbons $R$ with $l$ rows  and with the following property: the lengths of any two nonempty rows of $R$ differ by at most one and the lengths of any two nonempty columns of $R$ differ by at most one.
\item For such $R$, a partition $\lambda$ is in the support of $s_R$ if and only if $\lambda$ has $N$ boxes and no more nonempty rows or columns than $R$.
\end{enumerate}
\end{theorem}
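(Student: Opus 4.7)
I plan to prove part (c) first; parts (a) and (b) will then follow by combining (c) with Section~\ref{sec:reduction}'s reduction to ribbons. For the containment ``$\subseteq$'' in (c)---which in fact holds for any ribbon $R$ with $l$ rows and $c = N - l + 1$ columns, not just balanced ones---I use a canonical straight-shape realization. Writing the row lengths of $R$ from top to bottom as $\alpha_1,\ldots,\alpha_l$, set $\tilde\lambda_i := \alpha_i + \alpha_{i+1} + \cdots + \alpha_l - (l-i)$ and $\tilde\mu_i := \tilde\lambda_{i+1} - 1$ for $i < l$, with $\tilde\mu_l := 0$. Then $R = \tilde\lambda/\tilde\mu$, $\ell(\tilde\lambda) = l$, and $\tilde\lambda_1 = c$. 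Since the Littlewood--Richardson coefficient $c^{\tilde\lambda}_{\tilde\mu,\lambda}$ vanishes unless $\lambda \subseteq \tilde\lambda$, every $\lambda$ in the support of $s_R$ satisfies $\ell(\lambda) \leq l$ and $\lambda_1 \leq c$.

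The reverse containment, for a balanced ribbon $R^*$, is the technical heart of the argument. Given an admissible $\lambda$, my plan is to construct an explicit Littlewood--Richardson tableau of shape $R^*$ with content $\lambda$. First, the top row of $R^*$ must be filled entirely with $1$'s: its rightmost entry is the first symbol of the reverse reading word, so must be a $1$, and weak row-increase then propagates $1$'s across the top row. Proceeding top-to-bottom, I fill each subsequent row by placing at its rightmost (shared) box the smallest value strictly exceeding the leftmost entry of the row above, then extending leftward with the smallest entries consistent with row-increase and the lattice property. The balanced structure of $R^*$---row lengths differing by at most one, column lengths in $\{1,2\}$---is what guarantees that this greedy procedure succeeds for every $\lambda$ with $\ell(\lambda) \leq l$ and $\lambda_1 \leq c$, with induction on $l$ formalizing the bookkeeping.

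With (c) established, (a) and (b) follow. All balanced ribbons with $l$ rows share the support $\{\lambda \vdash N : \ell(\lambda) \leq l,\ \lambda_1 \leq c\}$ by (c), so they constitute one support-equivalence class. Any non-balanced ribbon with $l$ rows has a canonical $\tilde\lambda$ failing to contain some admissible $\lambda$ (some coordinate $\lambda_i$ exceeds $\tilde\lambda_i$), so its support is a proper subset of the balanced one. Combined with Section~\ref{sec:reduction}'s reduction of maximal connected elements to ribbons, this yields a unique maximal connected element with $l$ rows for each $l \in \{1,\ldots,N\}$.

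The principal obstacle is verifying the greedy construction in the lower bound of (c). Controlling the lattice condition globally while maintaining column-strictness on every shared column requires careful case analysis, and the balanced hypothesis is used essentially for $\lambda$'s near the admissible boundary---those with $\lambda_1 = c$ or $\ell(\lambda) = l$---where any imbalance in the row lengths of $R^*$ would block the construction.
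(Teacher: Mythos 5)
Your upper-bound direction of (c) is fine and is a legitimate variant of the paper's argument: realizing the ribbon as $\tilde\lambda/\tilde\mu$ and using $c^{\tilde\lambda}_{\tilde\mu\,\lambda}\neq 0 \Rightarrow \lambda\subseteq\tilde\lambda$ gives $\ell(\lambda)\le l$ and $\lambda_1\le N-l+1$, where the paper instead uses the dominance bounds $\rows{R}\domleq\lambda\domleq\cols{R}^t$ of Lemma~\ref{lem:extreme_fillings}. The genuine gap is in the reverse direction, which is the entire technical content of the theorem (the paper's Theorem~\ref{thm:fullsupport}). Your ``greedy'' construction of a Littlewood--Richardson filling of the equitable ribbon is not actually an algorithm: the rule you state (rightmost shared box gets the smallest value exceeding the leftmost entry above, then extend leftward with smallest admissible entries) never consults the target partition $\lambda$, so it produces one particular filling with one particular content rather than a filling of content $\lambda$ for each admissible $\lambda$ (already for $\rib{22}$ the greedy filling has content $31$, and nothing in the rule produces $22$). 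Deciding how to distribute the multiplicities $\lambda_1,\dots,\lambda_l$ among the rows while preserving column-strictness at each shared box and the lattice condition on the reverse reading word is precisely the hard part; you acknowledge it as ``the principal obstacle'' but give no construction and no verification, and the claim that equitability is only needed ``near the admissible boundary'' is asserted, not proved. By contrast, the paper does not work with LR fillings at all here: it uses Gessel's expansion (Theorem~\ref{thm:ribexpansion}) and proves, by induction on the number of rows, that for every admissible $\lambda$ there is an SYT of shape $\lambda$ with descent set $S(\alpha)$, via a carefully chosen horizontal strip, an explicit cycle that repairs the descent at $N_{l-1}$, and a separate, delicate rectangle case. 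Your proposal replaces this with an unverified plan, so the heart of the theorem remains unproven.

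A secondary, repairable error: in your deduction of (a) and (b) you claim that any non-equitable ribbon with $l$ rows has outer shape $\tilde\lambda$ failing to contain some admissible $\lambda$. This is false in general: $\rib{212}$ is row equitable but not column equitable, its outer shape is $\tilde\lambda=322$, and every partition of $5$ with at most $3$ rows and $3$ columns is contained in $322$; nevertheless $32\notin\supp{\rib{212}}$, as one sees from $\lambda\domleq\cols{\rib{212}}^t=311$. So proper containment of supports must be argued via Lemma~\ref{lem:extreme_fillings} (dominance against $\rows{}$ and $\cols{}^t$), or avoided entirely by invoking Corollary~\ref{cor:full_reduction} (maximal connected elements are equitable ribbons) together with Lemma~\ref{lem:necconds} and Corollary~\ref{cor:onemaximal}, which is how the paper deduces (a) and (b) from (c).
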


\begin{example}
The subposet of $\Supp{5}$ consisting of connected skew shapes, which equals that for $\bigP{5}$, is shown in Figure~\ref{fig:p5connected}, and is 
readily checked to be consistent with Theorem~\ref{thm:main}.  The support of a skew shape $A$ can be read off from the poset as the partitions that are less than  or equal to $A$.   
\begin{figure}[htbp]
\begin{center}
\includegraphics[width=0.8\textwidth]{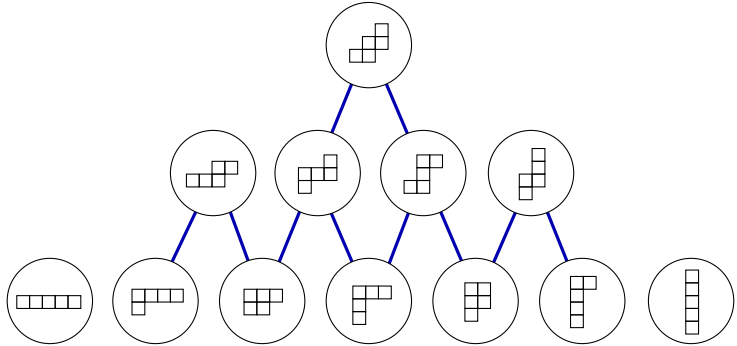}
\caption{The subposet of $\bigP{5}$ or $\Supp{5}$ consisting of the connected elements.  Each element includes the skew shape shown and, if different, its antipodal rotation.}
\label{fig:p5connected}
\end{center}
\end{figure}
\end{example}

\begin{example}\label{exa:fullsupp}
The maximal connected element of $\Supp{8}$ with three rows is $\eqsupp{R}$, where $R$ is shown on the right in Figure~\ref{fig:billiard}.  In contrast to the $\bigP{8}$ case, where $[R]$ contains just a single element and its antipodal rotation, in the support case $\eqsupp{R}$ contains three elements, namely the ribbons whose row lengths, read from top to bottom, are 233, 323 and 332.
By Theorem~\ref{thm:main}(c), the support of $s_R$ is 
\[ \{62, 611, 53, 521, 44, 431, 422, 332\}.\]  
\end{example}

The remainder of the paper is organized as follows.  In Section~\ref{sec:prelims}, we give the necessary symmetric function background and give precise definitions of many of the terms from this introduction.  In Section~\ref{sec:reduction}, we prove some foundational results that apply to both $\Supp{N}$ and $\bigP{N}$.  In particular, we reduce the problem by showing that the maximal connected elements of both $\bigP{N}$ and $\Supp{N}$ must be among those described in Theorem~\ref{thm:main}(b).  In Section~\ref{sec:fullsupport}, we prove Theorem~\ref{thm:main}(c), from which we will show (a) and (b) follow.  We conclude in Section~\ref{sec:conclusion} with open problems.

\section{Preliminaries}\label{sec:prelims} 

We follow the terminology and notation of \cite{Mac95,ec2}.

\subsection{Partitions and skew shapes} 
A \emph{composition} $\lambda=(\lambda_1, \ldots, \lambda_l)$ of $N$ is a sequence of positive integers whose sum is $N$.  We say that $N$ is the size of $\lambda$, 
denoted $|\lambda|$, and we call $l$ the \emph{length} of
$\lambda$ and denote it by $\ell(\lambda)$.  If $\lambda$ is a weakly decreasing sequence of positive integers then we say that $\lambda$ is a \emph{partition} of $N$, denoted $\lambda \vdash N$.  In this case, we will mainly think of $\lambda$ in terms of its \emph{Young diagram}, which is a
left-justified array of boxes that has $\lambda_i$ boxes in the $i$th row from the top.  For
example, if $\lambda = (4,4,3)$, which we will abbreviate as $\lambda = 443$, 
then the Young diagram of $\lambda$ is 
\setlength{\unitlength}{4mm}
\[
\begin{picture}(4,3)(0,0)
\put(0,3){\line(1,0){4}}
\put(0,2){\line(1,0){4}}
\put(0,1){\line(1,0){4}}
\put(0,0){\line(1,0){3}}
\multiput(0,3)(1,0){5}{\line(0,-1){2}}
\multiput(0,1)(1,0){4}{\line(0,-1){1}}
\end{picture}\ .
\]
We will often abuse terminology by referring to the Young diagram of $\lambda$ simply as $\lambda$.  For example, we will say that a partition $\mu$ is \emph{contained} in a partition $\lambda$ if the Young diagram of $\mu$ is contained in the Young diagram of $\lambda$.  
In this case, we define the \emph{skew shape} $\lambda/\mu$ to be the set of boxes in $\lambda$ that remain after we remove those boxes corresponding
to $\mu$.  For example, the skew shape $A = 443/2$ is represented as
\[
\begin{picture}(4,3)(0,0)
\put(2,3){\line(1,0){2}}
\put(0,2){\line(1,0){4}}
\put(0,1){\line(1,0){4}}
\put(0,0){\line(1,0){3}}
\multiput(2,3)(1,0){3}{\line(0,-1){1}}
\multiput(0,2)(1,0){5}{\line(0,-1){1}}
\multiput(0,1)(1,0){4}{\line(0,-1){1}}
\end{picture}\ .
\]
We will label skew shapes by simply
using single uppercase roman letters, as in the example above.  We 
write $|A|$ for the \emph{size} of $A$, which is simply the number of boxes in the skew shape $A$.
If $A = \lambda/\mu$ and $\mu$ is empty, then $A$ is said to be a \emph{straight shape}.

Certain classes of skew shapes will be of particular interest to us.  A skew shape $A$ is said to be \emph{disconnected} if it can be partitioned into two skew shapes $B$ and $C$ so that no box of $B$ shares a row or column with any box of $C$.  Otherwise $A$ is said to be \emph{connected}.
Playing a key role for us will be the class of \emph{ribbons}, which are connected skew shapes which don't contain the subdiagram  $22=
\setlength{\unitlength}{2mm}
\begin{picture}(2,2)(0,0.4)
\multiput(0,0)(0,1){3}{\line(1,0){2}}
\multiput(0,0)(1,0){3}{\line(0,1){2}}
\end{picture}$\,.
The skew shape above is not a ribbon whereas $R = \lambda/\mu = 433/22$, represented as
\setlength{\unitlength}{4mm}
\[
\begin{picture}(4,3)(0,0)
\put(2,3){\line(1,0){2}}
\put(2,2){\line(1,0){2}}
\put(0,1){\line(1,0){3}}
\put(0,0){\line(1,0){3}}
\multiput(2,3)(1,0){3}{\line(0,-1){1}}
\multiput(2,2)(1,0){2}{\line(0,-1){1}}
\multiput(0,1)(1,0){4}{\line(0,-1){1}}
\end{picture}\ ,
\]
certainly is.  Note that adjacent rows of a ribbon overlap in exactly one column, so we can completely classify a ribbon by the composition consisting of its row lengths from top to bottom.  We will write the ribbon above as $\rib{2,1,3}$, abbreviated as $\rib{213}$.

We will make significant use of the transpose operation on partitions.
For any
partition $\lambda$, we define its \emph{transpose} or \emph{conjugate} $\lambda^t$ to be the partition obtained by reading
the column lengths of $\lambda$ from left to right.  For example, $443^t = 3332$.
The transpose operation can be extended to skew shapes $A=\lambda/\mu$ by setting
$A^t = \lambda^t/\mu^t$.  
Another operation on the skew shape $A$ sends $A$ to its antipodal rotation, denoted $\flip{A}$, which is just $A$ rotated 180 degrees in the plane.  For example, $\flip{\rib{213}}=\rib{312}$ and, in general, the antipodal rotation of any ribbon will clearly just reverse the order of the row lengths.  

Given a skew shape $A$, a partition of particular interest will be $\rows{A}$ (resp.\ $\cols{A}$), defined to be the multiset of positive row (resp.\ column) lengths of $A$ sorted into weakly decreasing order.  For example, for the ribbon above we have $\rows{\rib{213}} =321$ and $\cols{\rib{213}}=3111$.  

We will compare partitions of equal size according to the \emph{dominance order}:
we will write $(\lambda_1, \lambda_2, \ldots, \lambda_r) \domleq  (\mu_1, \mu_2, \ldots, \mu_s)$ if 
\[
\lambda_1 + \lambda_2 + \cdots + \lambda_k \leq \mu_1 + \mu_2 + \cdots + \mu_k
\]
for all $k=1,2,\ldots,r$, where we set $\mu_i=0$ if $i > s$.
If is a nice exercise to show that if $\lambda \domleq \mu$ then $\mu^t \domleq \lambda^t$.

\subsection{Skew Schur functions and the Littlewood--Richardson rule}
While skew shapes are our main diagrammatical objects of study, our main algebraic
objects of interest are skew Schur functions, which we now define.  For a skew shape $A$, a
\emph{semi-standard Young tableau} (SSYT) of shape $A$ is a filling of the boxes of
$A$ with positive integers such that the entries weakly increase along the rows and 
strictly increase down the columns.  For example, 
\setlength{\unitlength}{4mm}
\[
\begin{picture}(4,3)(0,0)
\put(2,3){\line(1,0){2}}
\put(0,2){\line(1,0){4}}
\put(0,1){\line(1,0){4}}
\put(0,0){\line(1,0){3}}
\multiput(2,3)(1,0){3}{\line(0,-1){1}}
\multiput(0,2)(1,0){5}{\line(0,-1){1}}
\multiput(0,1)(1,0){4}{\line(0,-1){1}}
\put(2.3,2.2){1}
\put(3.3,2.2){2}
\put(0.3,1.2){1}
\put(1.3,1.2){1}
\put(2.3,1.2){2}
\put(3.3,1.2){3}
\put(0.3,0.2){5}
\put(1.3,0.2){7}
\put(2.3,0.2){7}
\end{picture}
\]
is an SSYT of shape $443/2$.  The \emph{content} of a filling $T$ is $c(T) = (c_1(T), c_2(T), \ldots)$, where $c_i(T)$ is the number of $i$'s in the filling.  
The \emph{skew Schur function} $s_A$ in the variables $(x_1, x_2, \ldots)$ is then defined by
\[
s_A = \sum_{T} x^{c(T)}
\]
where the sum is over all SSYTx $T$ of shape $A$, and  
\[
x^{c(T)} = x_1^{c_1(T)}
x_2^{c_2(T)} \cdots .
\]
For example, the SSYT above contributes the monomial $x_1^3 x_2^2 x_3 x_5 x_7^2$ to 
$s_{443/2}$.  We will also write $\ribschur{\alpha}$ to denote the skew Schur function of the ribbon with row lengths $\alpha$ from top to bottom.  

Although not obvious from the definition, it is well known  that $s_A = s_{\flip{A}}$ \cite[Exer.~7.56(a)]{ec2}.  For example, $\ribschur{213}=\ribschur{312}$.  While the identity map sends $s_A$ to $s_{\flip{A}}$, we denote by $\omega$ the well-known algebra endomorphism on symmetric functions defined by 
\begin{equation}\label{equ:omega}
\omega(s_\lambda) = s_{\lambda^t}
\end{equation}
for any partition $\lambda$.  Note that $\omega$ is an involution and it can be shown (see \cite[\S~I.5]{Mac95}, \cite[Thm.~7.15.6]{ec2} or, for the original proofs, \cite{Ait28,Ait31}) that $\omega$ extends naturally to skew Schur functions: $\omega(s_A) = s_{A^t}$.  

If $A$ is a straight shape, then $s_A$ is called simply a \emph{Schur function},
and some of the significance of Schur functions stems from the fact that they form a basis for the
symmetric functions.  Therefore, every skew Schur function can be written as a linear
combination of Schur functions.  A simple description of the coefficients in 
this linear combination is given by the celebrated \emph{Littlewood--Richardson rule}, which we 
now describe.  The \emph{reverse reading word} of an SSYT $T$ is the word obtained by 
reading the entries of $T$ from right to left along the rows, taking the 
rows from top to bottom.  For example, the SSYT above has reverse reading word 213211775.
An SSYT $T$ is said to be a \emph{Littlewood--Richardson filling} or  \emph{LR-filling} if, as we read the reverse reading word of $T$,
the number of appearances of $i$ always stays ahead of the number of appearances of
$i+1$, for $i=1,2,\ldots$.
The reader is invited to check that the only possible LR-fillings of $443/2$ have
reading words 112211322 and 112211332.  The Littlewood--Richardson rule \cite{LiRi34, Sch77, ThoThesis, Tho78} then states that
\[
s_{\lambda/\mu} = \sum_{\nu} c^\lambda_{\mu\nu} s_\nu\ ,
\]
where $c^\lambda_{\mu\nu}$ is the ubiquitous \emph{Littlewood--Richardson} coefficient, 
defined to be the number of LR-fillings of $\lambda/\mu$ with content $\nu$. 
For example, if $A = 443/2$, then $s_A = s_{441} + s_{432}$.
It follows that any skew Schur function can be written 
as a linear combination of Schur functions with all positive coefficients, and we thus say
that skew Schur functions are \emph{Schur-positive}.  

When $\lambda/\mu=\alpha$ is a ribbon, the expansion of $\ribschur{\alpha}$ in terms of Schur functions can be written in an alternative form, and it is this alternative form that will be most useful to us.  
By the size of an SSYT $T$, we will just mean the size of $A$, where $T$ has shape $A$.   If an SSYT $T$ of size $N$ has entries $\{1, 2, \ldots, N\}$, each necessarily appearing exactly once, then $T$ is said to be a \emph{standard Young tableau} (SYT).  The \emph{descent set} of an SYT $T$ is defined to be those entries $i$ for which $i+1$ appears in a lower row in $T$ than $i$.  
For example, the SYT
\setlength{\unitlength}{4mm}
\[
\begin{picture}(4,3)(0,0)
\put(0,3){\line(1,0){4}}
\put(0,2){\line(1,0){4}}
\put(0,1){\line(1,0){4}}
\put(0,0){\line(1,0){1}}
\multiput(0,3)(1,0){2}{\line(0,-1){3}}
\multiput(2,3)(1,0){3}{\line(0,-1){2}}
\put(0.3,2.2){1}
\put(1.3,2.2){2}
\put(2.3,2.2){3}
\put(3.3,2.2){6}
\put(0.3,1.2){4}
\put(1.3,1.2){5}
\put(2.3,1.2){7}
\put(3.3,1.2){9}
\put(0.3,0.2){8}
\end{picture}
\]
has size 9 and descent set $\{3,6,7\}$. 
Note that every composition $\alpha=(\alpha_1, \ldots, \alpha_l)$ of $N$ also naturally gives rise to a subset of $\{1,\ldots,N-1\}$, namely, $\{\alpha_1,  \alpha_1+\alpha_2, \ldots,
\alpha_1+\alpha_2+\cdots+\alpha_{l-1}\}$, which we denote by $S(\alpha)$.  
The following result is due to Ira Gessel \cite{Ges84}.
\begin{theorem}[\cite{Ges84}]\label{thm:ribexpansion}
For any composition $\alpha$ of $N$,
\[ 
\ribschur{\alpha} = \sum_{\lambda \vdash N} d_{\lambda \alpha} s_\lambda,
\]
where $d_{\lambda \alpha}$ equals the number of SYT of shape $\lambda$ and descent set $S(\alpha)$.   
\end{theorem}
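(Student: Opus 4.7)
The plan is to prove Theorem~\ref{thm:ribexpansion} via the standard fundamental-quasisymmetric expansion of skew Schur functions, combined with the descent statistics of the Robinson--Schensted--Knuth (RSK) correspondence.

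First I would establish the general expansion: for any skew shape $A$ of size $N$,
\[
s_A = \sum_{T \in \mathrm{SYT}(A)} F_{D(T)},
\]
where $F_S$ is the fundamental quasisymmetric function associated to $S \subseteq [N-1]$, namely $F_S = \sum x_{i_1} \cdots x_{i_N}$ summed over weakly increasing positive-integer sequences $i_1 \leq \cdots \leq i_N$ with $i_j < i_{j+1}$ whenever $j \in S$. The argument is by standardization: each SSYT of shape $A$ standardizes to a unique SYT $T$, and the SSYTs standardizing to a given $T$ contribute precisely the monomials comprising $F_{D(T)}$, since their weakly increasing entries must strictly increase at exactly the descents of $T$.

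Specializing to $A = \rib{\alpha}$ and to each straight shape $\lambda \vdash N$ gives
\[
\ribschur{\alpha} = \sum_{T \in \mathrm{SYT}(\rib{\alpha})} F_{D(T)}
\qquad\text{and}\qquad
s_\lambda = \sum_{T \in \mathrm{SYT}(\lambda)} F_{D(T)}.
\]
Since the $F_S$ form a basis of the degree-$N$ quasisymmetric functions, matching the coefficient of each $F_S$ in the desired identity $\ribschur{\alpha} = \sum_\lambda d_{\lambda\alpha} s_\lambda$ reduces the theorem to proving
\[
\bigl|\{T \in \mathrm{SYT}(\rib{\alpha}) : D(T) = S\}\bigr|
 = \sum_{\lambda \vdash N} d_{\lambda\alpha} \cdot \bigl|\{T' \in \mathrm{SYT}(\lambda) : D(T') = S\}\bigr|
\]
for every $S \subseteq [N-1]$. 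To establish this numerical identity I would first exhibit a bijection from $\mathrm{SYT}(\rib{\alpha})$ to the permutations of $[N]$ whose descent set is $S(\alpha)$, by reading the entries of an SYT along a snake-like traversal of the ribbon's cells from one end to the other. The column-strict constraints of the ribbon force the resulting permutation's descents to sit exactly at $S(\alpha)$, while the row-descent set $D(T)$ of the original SYT transfers to the descent set of the inverse permutation. Then invoking the classical RSK correspondence---which bijects $S_N$ with pairs $(P,Q)$ of SYTs of a common shape $\lambda \vdash N$ satisfying $D(\pi) = D(Q)$ and $D(\pi^{-1}) = D(P)$---yields a bijection between $\{T \in \mathrm{SYT}(\rib{\alpha}) : D(T) = S\}$ and pairs $(P,Q)$ of a common shape $\lambda$ with $D(Q) = S(\alpha)$ and $D(P) = S$, whose count shape-by-shape is exactly the right-hand side above.

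The main technical hurdle will be setting up the reading-order bijection correctly: one must choose the traversal so that the forced column descents of the ribbon land precisely at $S(\alpha)$ while the SYT's row-descent set corresponds to the descent set of the inverse permutation. This is careful but elementary geometric bookkeeping, and may require invoking the antipodal-rotation identity $s_A = s_{\flip{A}}$ applied to $\rib{\alpha}$ to align conventions. Once the bijection is in hand, the descent-preservation properties of RSK deliver the numerical identity, and hence the theorem.
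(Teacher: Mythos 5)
The paper gives no proof of this statement at all: Theorem~\ref{thm:ribexpansion} is quoted from Gessel \cite{Ges84} and used as a black box, so there is no internal argument to compare yours against. Judged on its own merits, your proposal is correct and is essentially the standard (Gessel-style) derivation: the expansion $s_A=\sum_{T\in\mathrm{SYT}(A)}F_{D(T)}$ by standardization, linear independence of the fundamental quasisymmetric functions in each degree, the path-reading bijection from ribbon SYT to permutations, and the RSK descent facts $D(Q(\pi))=D(\pi)$ and $D(P(\pi))=D(\pi^{-1})$ all fit together exactly as you describe. The one wrinkle, which you correctly flag, is the convention: reading the cells of $\rib{\alpha}$ along the path from the bottom-left end upward (each row left to right) forces descents of the word precisely at positions $\alpha_l,\ \alpha_l+\alpha_{l-1},\ \ldots$, i.e.\ at $S(\mathrm{rev}(\alpha))$ rather than $S(\alpha)$, while the SYT descent set $D(T)$ becomes the descent set of the inverse permutation; this is repaired either by $s_{\rib{\alpha}}=s_{\flip{\rib{\alpha}}}$ as you suggest, or by the evacuation symmetry giving $d_{\lambda\alpha}=d_{\lambda\,\mathrm{rev}(\alpha)}$. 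When writing it up you should also note explicitly that in a ribbon every pair of vertically adjacent cells is consecutive in this traversal, so the column-strict conditions are exactly the forced descents and the reading map is genuinely a bijection onto permutations with descent set exactly $S(\mathrm{rev}(\alpha))$; that observation closes the only place where the bookkeeping could slip, and it is routine rather than a gap.
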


As mentioned in the introduction, our main result concerns the 
\emph{support} of skew Schur functions.  The support $\supp{A}$ of $s_A$ 
is defined to be the set of those partitions $\nu$ for
which $s_\nu$ appears with nonzero coefficient when we expand $s_A$ as a linear combination of
Schur functions.  For example, we have $\supp{443/2} = \{441, 432\}$.  
We sometimes talk of the support of $A$, by which we mean the support of $s_A$.  

\section{Reducing the problem}\label{sec:reduction}

 At face value, Questions~\ref{que:schurpos} and \ref{que:support} require us to consider all connected skew shapes.  In this section, we will show that it suffices to consider only ribbons, and then show that the maximal connected elements must be ribbons whose multisets of row lengths and column lengths take a certain form.  As promised, we will also prove our earlier assertions about necessary conditions for two skew shapes to be equivalent.  Except where specified, the deductions about maximal connected elements in this section apply to both $\bigP{N}$ and $\Supp{N}$.  
 
We first need a preliminary result about the elements of the support of a skew shape. 
It 
appears in our notation in \cite{McN08}, although earlier proofs can be found in \cite{Lam77, Zab}.

\begin{lemma}\label{lem:extreme_fillings}  Let $A$ and $B$ be skew shapes.
 \renewcommand{\theenumi}{\alph{enumi}}
 \begin{enumerate}
\item If $\lambda \in \supp{A}$, then 
\[
\rows{A} \domleq \lambda \domleq \cols{A}^t, 
\]
 and both $s_{\rows{A}}$ and $s_{\cols{A}^t}$ appear with coefficient 1 in the Schur expansion of $s_A$. 
\item Consequently, if $\supp{A} \supseteq \supp{B}$, then
\[
\rows{A} \domleq \rows{B}  \mbox{\ \ and \ \ } \cols{A} \domleq \cols{B}.
\]
\end{enumerate}
\end{lemma}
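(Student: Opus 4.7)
My plan is to prove (a) first and observe that (b) then drops out immediately: applying (a) to $B$ places both $\rows{B}$ and $\cols{B}^t$ in $\supp{B} \subseteq \supp{A}$, and applying (a) to $A$ yields $\rows{A} \domleq \rows{B}$ and $\cols{B}^t \domleq \cols{A}^t$; the latter is equivalent to $\cols{A} \domleq \cols{B}$ because conjugation reverses the dominance order.

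For the dominance bounds in (a), I would handle the upper bound $\lambda \domleq \cols{A}^t$ by a direct column-counting argument applicable to any SSYT of shape $A$ with content $\lambda$: since entries strictly increase down each column, a column of length $c_j$ admits at most $\min(k, c_j)$ entries of value $\leq k$, so
\[
\lambda_1 + \cdots + \lambda_k \;\leq\; \sum_j \min(k, c_j) \;=\; (\cols{A}^t)_1 + \cdots + (\cols{A}^t)_k.
\]
For the lower bound $\rows{A} \domleq \lambda$, I would invoke the involution $\omega$ from \eqref{equ:omega}: since $\omega(s_A) = s_{A^t}$ and $\omega$ is an involution fixing the Schur basis up to conjugation, $\supp{A^t} = \{\nu^t : \nu \in \supp{A}\}$. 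Applying the upper bound already proven to $A^t$ yields $\lambda^t \domleq \cols{A^t}^t = \rows{A}^t$, and transposing flips this to $\rows{A} \domleq \lambda$.

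It remains to verify the coefficient-one assertions; by the same $\omega$-symmetry this reduces to showing that $\rows{A}$ appears with coefficient exactly one in the Schur expansion of $s_A$. For existence, a direct construction of an LR-filling of $A$ with content $\rows{A}$ should be available by, for instance, inductively placing the largest remaining label in the shortest remaining row and peeling off. For uniqueness, I plan to argue by induction on $l = \ell(\rows{A})$: the Yamanouchi condition forces all $\rho_l$ copies of $l$ to appear at the tail of the reverse reading word, so they occupy the southernmost and rightmost cells of $A$; combined with column-strictness and the fact that these cells must form a horizontal strip of size exactly $\rho_l$, their placement is rigidly determined. Peeling off this strip leaves a smaller skew shape whose row-length multiset is $\rows{A}$ with its last part removed, and the claim follows by induction.

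The main obstacle I anticipate is the uniqueness half of this coefficient-one statement. The dominance inequalities are formal consequences of column-strictness and the $\omega$-involution, and producing at least one LR-filling with content $\rows{A}$ is straightforward, but ruling out all alternative such fillings needs genuine combinatorial care because rows of equal length and geometrically non-adjacent row segments can a priori admit label swaps that must be excluded by combining the lattice and column-strict conditions simultaneously. Once uniqueness is secured, part (a) is complete and part (b) follows at once from the deduction in the first paragraph.
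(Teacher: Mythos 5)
Your dominance bounds and the (a)$\Rightarrow$(b) deduction are correct: the column-counting argument gives $\lambda \domleq \cols{A}^t$ for any SSYT of content $\lambda$, the $\omega$-involution transfers this to $\rows{A} \domleq \lambda$, and (b) follows formally once both extreme partitions are known to lie in the support with coefficient $1$. Be aware, though, that the paper does not prove this lemma at all; it cites Lam, Zabrocki and \cite{McN08} precisely for the part you single out as the main obstacle, and that is where your sketch contains statements that are actually false, not merely unelaborated.

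Take the paper's own example $A=443/2$, with $\rows{A}=432$; the unique LR-filling of content $432$ has rows $11$, $1122$, $233$, i.e.\ reverse reading word $112211332$. This defeats all three ingredients of your coefficient-one argument. First, the largest label $3$ occupies the \emph{longest} row, not the shortest, and your existence recipe of ``largest remaining label in the shortest remaining row'' cannot even give a column-strict filling in general (in the ribbon $\rib{213}$ the unique cell of the shortest row has a cell of $A$ directly below it, which would need an entry exceeding the maximum label). Second, the Yamanouchi condition does not force the copies of $l$ to the tail of the reverse reading word: the word above ends in $2$. What is true is only that the $l$'s sit at the bottoms of their columns and form a horizontal strip, and that alone does not rigidly determine their position (there are SSYT of content $\rows{A}$ with the $3$'s placed elsewhere which fail the lattice condition only globally). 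Third, the peeling step breaks your induction: removing the strip of $3$'s from the filling above leaves a shape with row multiset $\{4,2,1\}$, not $\rows{A}$ with its smallest part deleted ($\{4,3\}$), and the remaining filling has content $(4,3)$, which is not the rows-partition of the remaining shape, so the induction hypothesis in the form you state it simply does not apply. To close the argument you would need a strictly stronger inductive statement (or a different argument altogether, as in the cited references \cite{Lam77,Zab,McN08}); as written, both the existence and the multiplicity-one assertions for $s_{\rows{A}}$ remain unproven, and hence so does the coefficient-one claim for $s_{\cols{A}^t}$ that you obtain from it by $\omega$.
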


The following assertions were partially stated in the introduction, and allow us to talk about the number of rows, columns and connected components of elements of $\bigP{N}$ and $\Supp{N}$  without ambiguity.  
 
 \begin{lemma}\label{lem:necconds}  Suppose skew shapes $A$ and $B$ are in the same equivalence class in $\Supp{N}$, i.e., $\eqsupp{A}=\eqsupp{B}$.   (In particular, this is the case if $[A]=[B]$ in $\bigP{N}$.)  Then the following conditions are true:
 \renewcommand{\theenumi}{\alph{enumi}}
 \begin{enumerate}
 \item \label{ite:rows_equal} $\rows{A}=\rows{B}$ and $\cols{A}=\cols{B}$.  In particular, $A$ and $B$ have the same number of nonempty rows, and similarly for columns; 
 \item $A$ and $B$ have the same number of connected components; 
 \item \label{ite:ribbon} if $A$ is a ribbon, then so is $B$.
 \end{enumerate}
 \end{lemma}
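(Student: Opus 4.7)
My plan for part (a) is to apply Lemma~\ref{lem:extreme_fillings}(a) directly: that lemma identifies $\rows{A}$ as the unique dominance-minimum of $\supp{A}$ and $\cols{A}^t$ as its unique dominance-maximum (each appearing with coefficient $1$ in $s_A$). Since $\supp{A}=\supp{B}$, these extremes must coincide for the two supports, so $\rows{A}=\rows{B}$ and $\cols{A}^t=\cols{B}^t$; transposing the latter yields $\cols{A}=\cols{B}$. The second assertion of (a) follows at once, since by definition $\ell(\rows{S})$ is the number of nonempty rows of $S$ and $\ell(\cols{S})$ is the number of nonempty columns.

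Parts (b) and (c) will require strictly more than (a), because the multisets $\rows$ and $\cols$ alone do not determine the number of components: for example $\rib{221}=321/1$ (one component) and the disconnected shape $322/2$ (two components) both have $\rows=\cols=(2,2,1)$ and size $5$. So the full equality of supports must come into play. The main combinatorial tool I plan to use is the cell-count inequality
\[
|S|\ \ge\ \ell(\rows{S})+\ell(\cols{S})-k,
\]
valid for any skew shape $S$ with $k$ connected components, with equality if and only if every component of $S$ is a ribbon (since a connected skew shape with $r$ rows and $c$ columns has at least $r+c-1$ cells, with equality exactly for ribbons).

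Turning to (c): assuming $A$ is a ribbon, $|A|=\ell(\rows{A})+\ell(\cols{A})-1$, and by (a) the same numerics hold for $B$. The inequality applied to $B$ then leaves precisely two possibilities: either $B$ is itself a single ribbon (equality case, $k_B=1$), or $B$ has $k_B\ge 2$ components with at least one component that is not a ribbon. Ruling out the second alternative is the main obstacle in this part; I plan to do so by exhibiting a partition that lies in $\supp{A}$ but cannot lie in $\supp{B}$ under that alternative. The natural candidate is the hook $(N-\ell+1,1^{\ell-1})$ where $\ell=\ell(\rows{A})$: for $A=\rib{\alpha}$, Theorem~\ref{thm:ribexpansion} together with a direct count of SYT of hook shape produces the unique SYT whose descent set equals $S(\alpha)$, confirming the hook is in $\supp{A}$. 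A Littlewood--Richardson filling argument should then show that any skew shape which is disconnected or contains a $2\times 2$ block admits no LR-filling with this hook as content, yielding the required contradiction.

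For part (b), the cell-count inequality combined with (a) handles the case where $A$ and $B$ are both disjoint unions of ribbons, because then $k_A=k_B=\ell(\rows{A})+\ell(\cols{A})-N$, which is determined by (a). The remaining case, where some component of $A$ or $B$ is not a ribbon, I plan to reduce to (c)-style arguments applied component-by-component via the factorisation $s_S=\prod_i s_{S_i}$ for disconnected $S$, matching the support of each factor against those of the other shape. The hard part will be extracting enough information from $\supp{A}=\supp{B}$ to force the component decompositions of $A$ and $B$ to agree up to reordering, rather than merely agreeing on gross invariants.
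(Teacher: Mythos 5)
Your part (a) is fine and is essentially the paper's own argument (the paper quotes Lemma~\ref{lem:extreme_fillings}(b), you quote part (a); same content). For (b) and (c), however, the paper does not argue from scratch: it derives both immediately from \cite[Cor.~4.1]{McN08}, which says that equality of supports forces equality of the row-overlap partitions (so the number of components $r_0+1$ and the ribbon property ``$r_i=0$ for $i\neq 1$'' are support invariants). Your proposal replaces this citation with a hands-on argument, and that is where the gaps are. For (c), the pivotal claim --- ``any skew shape which is disconnected or contains a $2\times 2$ block admits no LR-filling with this hook as content'' --- is false as stated: the disjoint union of a row of length $3$ and a row of length $2$ is disconnected, has $\ell=2$ rows, and its support $\{5,41,32\}$ does contain the hook $(4,1)=(N-\ell+1,1^{\ell-1})$. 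What saves you in the actual application is the column equality from (a): since $\rows{B}=\rows{A}$ and $\cols{B}=\cols{A}$ with $A$ a ribbon, $B$ has exactly $N-\ell+1$ columns. Then if $B=B_1\sqcup B_2$ with $B_1$ not a ribbon, $s_B=s_{B_1}s_{B_2}$ and positivity of LR coefficients show that the hook in $\supp{B}$ would force some $\mu\in\supp{B_1}$ with $\mu$ contained in a hook, hence itself a hook; but by Lemma~\ref{lem:extreme_fillings}(a) every $\mu\in\supp{B_1}$ has at most $\ell(\rows{B_1})$ rows and $\ell(\cols{B_1})$ columns, so at most $\ell(\rows{B_1})+\ell(\cols{B_1})-1<|B_1|$ cells if it is a hook --- a contradiction. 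So (c) is repairable along your lines, but the lemma you invoke is not correct as written and is only asserted (``should then show''), not proved.

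Part (b) is the more serious gap. Your cell-count inequality settles only the case where \emph{every} component of both $A$ and $B$ is a ribbon; for the remaining case you say the hard part ``will be extracting enough information from $\supp{A}=\supp{B}$'' --- i.e., you acknowledge you do not have the argument. There is no analogue of the hook trick available there: when neither shape is close to a ribbon, $\rows{}$ and $\cols{}$ plus size genuinely fail to determine the component count (your own example $\rib{221}$ versus $322/2$ shows this), and forcing the component structure from the support alone is precisely the nontrivial content of \cite[Cor.~4.1]{McN08} that the paper leans on. As it stands, your proposal proves (a), sketches a fixable but flawed route to (c), and leaves (b) open.
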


\begin{proof}
If $\eqsupp{A}=\eqsupp{B}$ then, by Lemma~\ref{lem:extreme_fillings}(b), we have $\rows{A} \domleq \rows{B}$ and $\rows{B} \domleq \rows{A}$.  Thus $\rows{A}=\rows{B}$ and hence $A$ and $B$ have the same number of nonempty rows.  Similarly, $\cols{A}=\cols{B}$ and $A$ and $B$ have the same number of nonempty columns.  

Parts (b) and (c) follow immediately from \cite[Cor\ 4.1]{McN08}.  Indeed, this corollary (with $k=2$) states that if $\supp{A}=\supp{B}$ then $A$ and $B$ must have the same ``row overlap partitions,'' meaning that if $A$ has $r_i$ pairs of adjacent nonempty rows that overlap in exactly $i$ columns, then so must $B$.  The number of connected components of $A$ is $r_0+1$.  We also see that $A$ is a ribbon if and only if $r_i=0$ for $i\neq1$.
\end{proof}

We now make our first major reduction in the number of connected skew shapes we must consider when tackling either Question~\ref{que:schurpos} or Question~\ref{que:support}.    

\begin{proposition}\label{pro:ribbons_only}
Suppose $A$ is a connected skew shape of size $N$  that is not a ribbon.  Then there exists a ribbon $R$ such that $[A] \les [R]$ and $\eqsupp{A} \lesupp \eqsupp{R}$.  
\end{proposition}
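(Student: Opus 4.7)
The plan is to take $R$ to be the ribbon with the same row-length sequence as $A$: letting $\alpha=(\alpha_1,\ldots,\alpha_l)$ denote the row lengths of $A$ read from top to bottom, set $R=\langle\alpha_1,\ldots,\alpha_l\rangle$. Since $A$ is connected, every pair of adjacent rows of $A$ overlaps in $o_i\geq 1$ columns, and since $A$ is not a ribbon, some $o_i\geq 2$. My strategy is to connect $A$ to $R$ through a sequence of elementary ``overlap-reducing'' moves and to argue that the Schur function strictly increases along the way.

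The elementary move is the following shift: given a skew shape $B=\lambda/\mu$ in which adjacent rows $r$ and $r+1$ overlap in $k\geq 2$ columns, define $B'$ by adding $1$ to each of $\lambda_1,\ldots,\lambda_r$ and $\mu_1,\ldots,\mu_r$; geometrically, this shifts the top $r$ rows right by one column. A direct check confirms that $B'$ is a valid skew shape (the partition inequalities at index $r$ survive because $\lambda_r\geq\lambda_{r+1}$ and $\mu_r\geq\mu_{r+1}$ in $B$), has the same row-length sequence as $B$, has the overlap between rows $r$ and $r+1$ reduced by exactly one, and leaves every other adjacent-row overlap unchanged. Iterating this move $\sum_i(o_i-1)$ times yields a sequence of connected skew shapes $A=A_0\to A_1\to\cdots\to A_t=R$, all with row-length sequence $\alpha$.

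The crux is to show that $s_{A_{i+1}}-s_{A_i}$ is Schur-positive at each stage; Schur-positivity of $s_R-s_A$ then follows by telescoping. My first approach is via the Jacobi--Trudi determinant $s_{\lambda/\mu}=\det(h_{\lambda_p-\mu_q-p+q})$: the matrix for $A'$ differs from that for $A$ only in the off-diagonal blocks separating the shifted and unshifted rows, where the $h$-indices shift in a controlled way, and a careful expansion together with the Lindstr\"om--Gessel--Viennot lattice-path model should express $s_{A'}-s_A$ as a nonnegative combination. An alternative I would try in parallel is to construct an explicit content-preserving injection from Littlewood--Richardson fillings of $A$ into those of $A'$, exploiting the fact that $A'$ has exactly one additional column.

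Granted Schur-positivity, we immediately obtain $\supp{A}\subseteq\supp{R}$. For the strict containment, observe that the width of $A$ equals $N-\sum_i o_i\leq N-l$ while $R$ has width $N-l+1$, so $\cols{A}$ has strictly fewer parts than $\cols{R}$, and in particular $\cols{A}\neq\cols{R}$. By Lemma~\ref{lem:extreme_fillings}(b) this rules out $\supp{A}=\supp{R}$, so $\eqsupp{A}\lesupp\eqsupp{R}$, and a fortiori $s_A\neq s_R$ so $[A]\les[R]$. The principal obstacle is the Schur-positivity of the single overlap-reducing move: its geometric effect is plainly visible, but converting the intuition into an explicit Schur-positive decomposition requires care, and I would be prepared to lean on the row-overlap-partition machinery of \cite{McN08} and on Hamel--Goulden-type determinantal identities if the Jacobi--Trudi route becomes unwieldy.
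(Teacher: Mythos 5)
Your skeleton is exactly the right one — it is the same reduction the paper uses: slide a block of rows one step to the right at an adjacent pair overlapping in at least two columns, iterate until all overlaps equal one, and get strictness from the fact that the ribbon has strictly more columns together with Lemma~\ref{lem:extreme_fillings}(b) (equivalently Lemma~\ref{lem:necconds}(a)). The strictness part of your argument is complete and correct. But the central claim, Schur-positivity of a single overlap-reducing slide, is not proved in your write-up: you offer two candidate strategies (Jacobi--Trudi/LGV, or an LR-filling injection) and explicitly defer both, so as it stands the proposal has a genuine gap precisely at the step on which everything else rests. Telescoping and the width count are routine; without the positivity of one slide there is no proof.

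The good news is that your second candidate closes the gap almost immediately, and it is exactly what the paper does. Given an LR-filling $T$ of $B$, slide the entries of the shifted rows along with their boxes to obtain a filling $T'$ of $B'$. The content is unchanged, and the reverse reading word of $T'$ is literally the same word as that of $T$, so the lattice (LR) condition is automatic. Weak increase along rows is clear. For column strictness at the interface, suppose a shifted box sits in row $r$, old column $c$, and the box of row $r+1$, column $c+1$, lies below its new position; then the box $(r,c+1)$ already belonged to $B$ (since $\mu_r\leq c-1$ and $\lambda_r\geq\lambda_{r+1}\geq c+1$), whence the entry in $(r,c)$ is at most the entry in $(r,c+1)$, which is strictly less than the entry in $(r+1,c+1)$. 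So $T\mapsto T'$ is a content-preserving injection of LR-fillings of $B$ into those of $B'$, giving $s_{B'}-s_B$ Schur-positive, and your telescoping and strictness arguments then finish the proposition. (There is no need to develop the Jacobi--Trudi or row-overlap machinery for this step.)
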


\begin{proof}
Since $A$ is connected but not a ribbon, there must exist an $i$ such that the $i$th and $(i+1)$st (counting from the top) rows of $A$ overlap in at least two columns.  Form a new skew shape $A'$ by sliding rows $i$ and higher one position to the right.  There is a natural content-preserving injection from the set of LR-fillings of $A$ into the set of LR-fillings of $A'$.  Indeed, if $T$ is an LR-filling of $A$, then sliding the entries of $T$ in rows $i$ and higher one position to the right gives a filling $T'$ of $A'$.  Since this slide preserves content and the SSYT and LR properties, we have the desired injection.

Repeating this procedure as necessary gives a content-preserving injection from the set of LR-fillings of $A$ into the set of LR-fillings of a ribbon $R$, and hence $[A] \leqs [R]$ and $\eqsupp{A} \leqsupp \eqsupp{R}$.   It remains to show strict inequality by showing that the injection is not a bijection.  While $\rows{R} = \rows{A}$ by construction, we see that $\ell(\cols{A}) < \ell(\cols{A'})$ and hence $\ell(\cols{A}) < \ell(\cols{R})$.  Applying Lemma~\ref{lem:necconds}(\ref{ite:rows_equal}) then gives that $[A] \neq [R]$ and $\eqsupp{A} \neq \eqsupp{R}$, and so $[A] \les [R]$ and $\eqsupp{A} \lesupp \eqsupp{R}$, as required.
\end{proof}

In our search for maximal connected elements, Proposition~\ref{pro:ribbons_only} combined with Lemma~\ref{lem:necconds}(\ref{ite:ribbon}) allows us to restrict our attention to ribbons.
We now show that results from \cite{KWvW08} allow us to do even better.  

\begin{definition}
We say that a ribbon is \emph{row equitable} (resp.\ \emph{column equitable}) if all its row (resp.\ column) lengths differ by at most one.  A ribbon is said to be \emph{equitable} if it both row and column equitable.
\end{definition}

\begin{proposition}\label{pro:equitable_only} \
\begin{enumerate}
\renewcommand{\theenumi}{\alph{enumi}}
\item Suppose $A$ is a ribbon which is not row equitable.   Then there exists a row equitable ribbon $R$ such that $[A] \les [R]$ and $\eqsupp{A} \lesupp \eqsupp{R}$.
\item Suppose $A$ is a ribbon which is not column equitable.   Then there exists a column equitable ribbon $R$ such that $[A] \les [R]$ and $\eqsupp{A} \lesupp \eqsupp{R}$.
\item Suppose $A$ is a ribbon which is not equitable.   Then there exists an equitable ribbon $R$ such that $[A] \les [R]$ and $\eqsupp{A} \lesupp \eqsupp{R}$.
\end{enumerate}
\end{proposition}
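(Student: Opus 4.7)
The plan is to invoke the Schur-positivity results for ribbons established in \cite{KWvW08}. The key input for (a) is a ``balancing move'' on compositions: if $\alpha$ has two parts differing by at least $2$, then there is a local modification $\alpha \to \alpha'$ (preserving both size and length) for which $\ribschur{\alpha'} - \ribschur{\alpha}$ is Schur-positive. Iterating this move terminates --- for instance, $\sum_i \alpha_i^2$ strictly decreases at each step --- and produces a row equitable composition $\alpha^{*}$ with the same length and sum as $\alpha$. Setting $R := \rib{\alpha^{*}}$ gives a row equitable ribbon with $s_R - s_A$ Schur-positive.

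To upgrade the weak orders to strict ones in (a): since $A$ is not row equitable while $R$ is, and they have the same number of rows summing to the same total, their row-length multisets must differ, so $\rows{A} \neq \rows{R}$. Lemma~\ref{lem:necconds}(\ref{ite:rows_equal}) then forces $\eqsupp{A} \neq \eqsupp{R}$, giving $[A] \les [R]$. For strict support, Lemma~\ref{lem:extreme_fillings}(a) places $\rows{R} \in \supp{R}$; if $\rows{R}$ also lay in $\supp{A}$, then combining Lemma~\ref{lem:extreme_fillings}(a) applied to $A$ (which gives $\rows{A} \domleq \rows{R}$) with Lemma~\ref{lem:extreme_fillings}(b) applied to $\supp{A} \subseteq \supp{R}$ (which gives $\rows{R} \domleq \rows{A}$) would force $\rows{A} = \rows{R}$, a contradiction. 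Hence $\eqsupp{A} \lesupp \eqsupp{R}$.

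Part (b) follows from (a) by transposition. The transpose of a ribbon is a ribbon, and $A$ is column equitable if and only if $A^t$ is row equitable, so applying (a) to $A^t$ produces a row equitable ribbon $R'$ with $[A^t] \les [R']$ and $\eqsupp{A^t} \lesupp \eqsupp{R'}$. Then $R := (R')^t$ is column equitable, and the algebra involution $\omega$ (satisfying $\omega(s_B) = s_{B^t}$ and permuting the Schur basis via $s_\lambda \mapsto s_{\lambda^t}$) transfers both strict inequalities back to $A$ and $R$.

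Part (c) I would deduce from (a) and (b) by a finiteness argument. The collection of ribbons $B$ of size $N$ with $[A] \leqs [B]$ is finite and nonempty, hence has a maximal element $R$ in the Schur-positivity order. If $R$ were not row equitable, (a) applied to $R$ would produce a row equitable ribbon strictly above $R$, contradicting maximality; likewise (b) forces $R$ to be column equitable, so $R$ is equitable. Strictness of $[A] \les [R]$ and $\eqsupp{A} \lesupp \eqsupp{R}$ then follows exactly as in (a) (or its column analogue), since $A$ not equitable and $R$ equitable force $\rows{A} \neq \rows{R}$ or $\cols{A} \neq \cols{R}$. The main obstacle throughout is locating the right balancing move and its Schur-positivity statement in \cite{KWvW08}; once that machinery is in hand, the rest is a clean combination of the earlier lemmas with transposition and finiteness.
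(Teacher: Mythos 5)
Your overall architecture for (a) and (b) matches the paper's: obtain Schur-positive balancing moves from \cite{KWvW08}, iterate with a termination measure, get strictness from Lemma~\ref{lem:necconds}(\ref{ite:rows_equal}), and transpose via $\omega$ for (b). The genuine gap is the single step you rely on in (a), namely ``if $\alpha$ has two parts differing by at least $2$, there is a local modification $\alpha\to\alpha'$ preserving size and length with $\ribschur{\alpha'}-\ribschur{\alpha}$ Schur-positive.'' This is not a quotable result of \cite{KWvW08}, and you explicitly leave locating it as the main obstacle; it is precisely where the work lies. What \cite{KWvW08} actually provides is (i) a move for two \emph{adjacent} rows whose lengths differ by at least two, and only in one orientation (Cor.~2.8), and (ii) a move replacing a run of row lengths $a+1,a,\ldots,a,a-1$ by all $a$'s (Thm.~2.13). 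The paper bridges the general case by choosing the offending rows $i,j$ with $|i-j|$ minimal---minimality forces either adjacency or the existence of such a monotone run in $A$ or $\flip{A}$---and uses $s_A=s_{\flip{A}}$ to fix the orientation. Without this reduction your move is an unproved assertion, so as written (a), and hence (b) and (c), is incomplete. Your strictness argument is fine: since the row equitable multiset with fixed length and sum is unique, $\rows{A}\neq\rows{R}$, and Lemma~\ref{lem:necconds} plus weak containment already give both strict inequalities (your extra detour through Lemma~\ref{lem:extreme_fillings} is redundant).

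Your part (c) is a genuinely different and valid route, worth noting. You take a maximal class among the finitely many ribbons $B$ of size $N$ with $[A]\leqs[B]$ and use (a) and (b) to show any such maximal $R$ must be equitable, with strictness from $\rows{A}\neq\rows{R}$ or $\cols{A}\neq\cols{R}$ (equality of both would make $A$ equitable). The paper instead composes (b) then (a) and must verify that the row-balancing moves do not destroy column equitability, via $\cols{R}\domleq\cols{A'}$ and the fact that the equitable column multiset is dominance-minimal among partitions with that many parts; your maximality argument sidesteps that verification entirely, at the cost of being non-constructive. So once the move in (a) is properly justified from Cor.~2.8 and Thm.~2.13 of \cite{KWvW08}, your (c) is a clean alternative to the paper's.
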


\begin{proof} We first prove (a).  Suppose the length of row $i$ of $A$ is at least two bigger than the length of row $j$ of $A$, where we choose $i$ and $j$ so that $|i-j|$ is minimal.  If $i=j-1$, then let $R'$ be the ribbon obtained from $A$ by making row $i$ one box shorter and row $j$ one box longer.  Applying \cite[Cor.\ 2.8]{KWvW08}, we get immediately that $s_{R'} - s_A$ is Schur-positive.  If $i=j+1$ then, since $s_A=s_{\flip{A}}$, we can apply exactly the same technique with $\flip{A}$ in place of $A$.  

Now suppose that $|i-j|>1$.  We see that, since $|i-j|$ was chosen to be minimal, there must exist an adjacent sequence of rows of $A$ or $\flip{A}$ of lengths $a+1, a, \ldots, a, a-1$ read from top to bottom.   Let $R'$ be the ribbon obtained by giving all rows in the sequence the length $a$.  This is exactly the situation necessary for \cite[Thm.\ 2.13]{KWvW08}: we deduce that $s_{R'} - s_A$ is Schur-positive.   

 We conclude that $[A] \leqs [R']$, so $\eqsupp{A} \leqsupp \eqsupp{R'}$.  To obtain strict inequalities, apply Lemma~\ref{lem:necconds}(\ref{ite:rows_equal}), observing that in all cases, the resulting ribbon $R'$ satisfies $\rows{R'} \domle \rows{A}$.
 
Since $\rows{R'} \domle \rows{A}$, if we
repeat this whole procedure, now working with $R'$ in place of $A$, we will eventually arrive at
a row equitable ribbon $R$ such that $[A] \les [R]$ and $\eqsupp{A} \lesupp \eqsupp{R}$, as required.

To prove (b), we invoke the $\omega$ involution from~$\eqref{equ:omega}$.  By definition,   
$\omega$ preserves the properties of Schur-positivity and support containment.  More specifically, for skew shapes $A$ and $B$, $[A] \leqs [B]$ if and only if $[A^t] \leqs [B^t]$, and similarly for $\leqsupp$.  Therefore, we can apply the procedure from (a) to $A^t$, which is not row equitable, to yield a row equitable ribbon $R$ such that $[A^t] \les [R]$ and $\eqsupp{A^t} \lesupp \eqsupp{R}$.  Therefore, we have a column equitable ribbon $R^t$ such that 
$[A] \les [R^t]$ and $\eqsupp{A} \lesupp \eqsupp{R^t}$.

To prove (c), we first apply (b) to produce a column equitable ribbon $A'$ such that 
\begin{equation}\label{equ:colequitable}
[A] \leqs [A'] \mbox{\ \ and\ \ }  \eqsupp{A} \leqsupp \eqsupp{A'};
\end{equation} 
we now have weak inequality since $A$ may itself  be column equitable.  We now apply the operations of (a) to $A'$ to produce a row equitable ribbon $R$ such that 
\begin{equation}\label{equ:rowequitable}
[A'] \leqs [R] \mbox{\ \ and\ \ }  \eqsupp{A'} \leqsupp \eqsupp{R}.
\end{equation} 
The key idea is that in applying the operations of (a), we can check that $\cols{R} \domleq \cols{A'}$.  This means that we must have $\cols{R} = \cols{A'}$ since $A'$ is column equitable, and so $R$ is column equitable. Since $A$ is not equitable, the inequalities in at least one of~\eqref{equ:colequitable} and~\eqref{equ:rowequitable} must be strict.  Therefore, $R$ is an equitable ribbon with the required properties.  
\end{proof}
 
By Lemma~\ref{lem:necconds}, if a skew shape $B$ satisfies $B \in [R]$ or $B \in \eqsupp{R}$, and $R$ is an equitable ribbon, then $B$ is a ribbon with the same multisets of row lengths and column lengths as $R$, and so must also be an equitable ribbon.  Combining this fact with Proposition~\ref{pro:ribbons_only} and Proposition~\ref{pro:equitable_only}(c) allows us to conclude the following reduction.  

\begin{corollary}\label{cor:full_reduction}
If $[A]$ (resp.\ $\eqsupp{A}$) is a maximal connected element of $\bigP{N}$ (resp.\ $\Supp{N}$) then $A$ 
is an equitable ribbon.
\end{corollary}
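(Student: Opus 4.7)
The plan is to proceed by contrapositive, using Propositions~\ref{pro:ribbons_only} and~\ref{pro:equitable_only}(c) to strictly improve any non-equitable-ribbon representative. Concretely, let $A$ be a connected skew shape of size $N$ and suppose $A$ is not an equitable ribbon; I will produce an equitable ribbon $R$ with $[A] \les [R]$ and $\eqsupp{A} \lesupp \eqsupp{R}$. Since ribbons are by definition connected, $R$ then witnesses that neither $[A]$ nor $\eqsupp{A}$ can be maximal among connected elements, which is the desired contrapositive of the corollary.

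First I would split into two cases. If $A$ is not a ribbon, Proposition~\ref{pro:ribbons_only} directly gives a ribbon $R'$ with $[A] \les [R']$ and $\eqsupp{A} \lesupp \eqsupp{R'}$; replace $A$ by $R'$ and continue. Now we may assume $A$ is a (connected) ribbon. If $A$ is not equitable, Proposition~\ref{pro:equitable_only}(c) produces an equitable ribbon $R$ with $[A] \les [R]$ and $\eqsupp{A} \lesupp \eqsupp{R}$. Composing the two strict relations (using transitivity of $\les$ and $\lesupp$) handles the case where $A$ is neither a ribbon nor equitable, completing the contrapositive.

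The one subtlety is that the corollary asserts a property of the representative $A$, whereas maximality is a property of the equivalence class. To match these, I would invoke the paragraph immediately preceding the corollary: by Lemma~\ref{lem:necconds}, any $B$ with $[B]=[R]$ or $\eqsupp{B}=\eqsupp{R}$ (for $R$ an equitable ribbon) satisfies $\rows{B}=\rows{R}$, $\cols{B}=\cols{R}$, and is itself a ribbon, hence is an equitable ribbon as well. Thus ``$A$ is an equitable ribbon'' is well-defined on the equivalence class, and the contrapositive argument above suffices.

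I do not expect a real obstacle here; the work has been done in Propositions~\ref{pro:ribbons_only} and~\ref{pro:equitable_only}, and the only thing to verify is that the ribbon $R$ produced is still connected (automatic, since every ribbon is connected) and that the strict inequalities compose correctly through the two-step reduction. The proof should be short.
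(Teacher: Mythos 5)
Your proposal is correct and matches the paper's own (very brief) argument: the paper likewise deduces the corollary by combining Proposition~\ref{pro:ribbons_only}, Proposition~\ref{pro:equitable_only}(c), and the observation via Lemma~\ref{lem:necconds} that being an equitable ribbon is a class invariant. Nothing is missing; your two-case contrapositive with composed strict inequalities is exactly the intended reduction.
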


The equitable ribbons are exactly those of interest in answering Question~\ref{que:support}, which we will do in detail in the next section.  The following example of Corollary~\ref{cor:full_reduction} will be useful in the next section; we include it here because it is also relevant to the $\bigP{N}$ case.

\begin{example}\label{exa:lengthone}
Suppose $[A]$ (resp.\ $\eqsupp{A}$) is a maximal connected element of $\bigP{N}$ (resp.\  $\Supp{N}$) such that $A$ has at least one row of length 1 and at least one column of length 1.  
 Applying Corollary~\ref{cor:full_reduction}, we have that $A$ is a ribbon all of whose rows and columns are of length 1 or 2.   Since $A$ has no columns of length $3$, the only rows of $A$ that can have length 1 are the top and bottom rows.  Similarly, only the first and last columns of $A$ can possibly have length 1.   
 
This information is enough to tell us that there are just two possibilities for $A$.  If both the first and last rows (resp.~columns) of $A$ have length 1, then $A$ will have no columns (resp.~rows) of length 1.  Thus the first row and last column, or the last row and first column, must be length 1, implying that $N$ is odd.  
The two possibilities are $A$ and $\flip{A}$, and so constitute the same element $\{A, \flip{A}\}$ of $\bigP{N}$ or $\Supp{N}$.  Note that there are no other elements equivalent to $A$ or $\flip{A}$ since, by Lemma~\ref{lem:necconds}(\ref{ite:rows_equal}), such an element would also have rows and columns of length 1 and would also be equitable, implying its equality to $A$ or $\flip{A}$.  When $N=5$, this element $\{A, \flip{A}\}$ appears at the top of Figure~\ref{fig:p5connected}.
\end{example}

Let us make one final note that applies to both $\bigP{N}$ and $\Supp{N}$.  Conjecture~\ref{con:max} and Theorem~\ref{thm:main} both include the assertion that the relevant poset contains exactly $N$ maximal connected elements.  It is not hard for us now to show that each poset contains \emph{at least} $N$ maximal connected elements.  

\begin{lemma}\label{lem:incomparable}
Let $R$ and $R'$ be ribbons of size $N$.  Then $\eqsupp{R}$ and $\eqsupp{R'}$ are incomparable in $\Supp{N}$ if $R$ and $R'$ have a different number of rows.  Consequently, $[R]$ and $[R']$ are incomparable in $\bigP{N}$ if $R$ and $R'$ have a different number of rows.  
\end{lemma}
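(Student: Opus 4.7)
The plan is to prove the contrapositive: if $\eqsupp{R}$ and $\eqsupp{R'}$ are comparable in $\Supp{N}$, then $R$ and $R'$ have the same number of rows. The key geometric observation is that a ribbon of size $N$ with $l$ rows has exactly $N-l+1$ columns, so $\ell(\rows{R}) = l$ while $\ell(\cols{R}) = N - l + 1$. This lets us recover the row count of a ribbon from the length of either its row or column partition.

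First I would isolate an elementary fact about dominance: if $\lambda,\mu \vdash N$ and $\lambda \domleq \mu$, then $\ell(\lambda) \geq \ell(\mu)$. The one-line justification is that, writing $k = \ell(\lambda)$, we have $\lambda_1 + \cdots + \lambda_k = N$ while $\mu_1 + \cdots + \mu_k \leq N$, and the dominance inequality at level $k$ forces $\mu_1 + \cdots + \mu_k = N$, so $\mu_j = 0$ for $j > k$.

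Next, assume without loss of generality that $\eqsupp{R} \leqsupp \eqsupp{R'}$, i.e., $\supp{R} \subseteq \supp{R'}$. Lemma~\ref{lem:extreme_fillings}(b) immediately yields
\[
\rows{R'} \domleq \rows{R} \qquad \text{and} \qquad \cols{R'} \domleq \cols{R}.
\]
Letting $l$ and $l'$ be the numbers of rows of $R$ and $R'$, the dominance fact above applied to rows gives $l' = \ell(\rows{R'}) \geq \ell(\rows{R}) = l$, and applied to columns gives $N - l' + 1 = \ell(\cols{R'}) \geq \ell(\cols{R}) = N - l + 1$, i.e., $l \geq l'$. Combining, $l = l'$, contradicting the hypothesis that $R$ and $R'$ have different numbers of rows. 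Hence $\eqsupp{R}$ and $\eqsupp{R'}$ are incomparable.

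The consequence for $\bigP{N}$ is automatic: Schur-positivity of $s_{R'} - s_R$ forces $\supp{R} \subseteq \supp{R'}$, so $[R] \leqs [R']$ implies $\eqsupp{R} \leqsupp \eqsupp{R'}$. Thus incomparability in $\Supp{N}$ propagates to incomparability in $\bigP{N}$. There is no real obstacle here; the whole argument is driven by the identity $\ell(\cols{R}) = N - \ell(\rows{R}) + 1$ specific to ribbons, which converts the two dominance inequalities from Lemma~\ref{lem:extreme_fillings}(b) into two opposing inequalities on $l$ and $l'$.
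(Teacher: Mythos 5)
Your proof is correct and follows essentially the same route as the paper: apply Lemma~\ref{lem:extreme_fillings}(b) to get the two dominance relations, use the fact that dominance-smaller partitions have at least as many parts, and combine with the ribbon identity $\ell(\rows{A}) + \ell(\cols{A}) = N+1$ to force equality of row numbers, with the $\bigP{N}$ statement following since Schur-positivity implies support containment. The only difference is that you spell out the dominance-length fact explicitly, which the paper leaves implicit.
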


\begin{proof}
Suppose, without loss of generality, that $\eqsupp{R} \leqsupp \eqsupp{R'}$.  Applying Lemma~\ref{lem:extreme_fillings}(b), we have that $\rows{R'} \domleq \rows{R}$ and hence $\ell(\rows{R}) \leq \ell(\rows{R'})$.  Similarly, $\ell(\cols{R}) \leq \ell(\cols{R'})$.  However, for any ribbon $A$ of size $N$, we can check that  $\ell(\rows{A}) + \ell(\cols{A}) = N + 1$.  We conclude that $\ell(\rows{R}) = \ell(\rows{R'})$, as required.

The second assertion of the lemma follows because incomparability in $\Supp{N}$ implies incomparability in $\bigP{N}$.  
\end{proof}

Lemma~\ref{lem:incomparable} has the following immediate consequence.

\begin{corollary}\label{cor:onemaximal}
For each $l =  1, \ldots, N$,  $\bigP{N}$ and $\Supp{N}$ each must contain at least one maximal connected element with $l$ rows.  
\end{corollary}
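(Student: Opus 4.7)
The plan is to derive the corollary directly from Lemma~\ref{lem:incomparable} together with the reductions already established in this section. For each $l \in \{1, \ldots, N\}$, I would first exhibit a single connected element with exactly $l$ rows, so that the relevant subset is nonempty in both posets. The hook ribbon $\rib{N-l+1, 1, 1, \ldots, 1}$ (with $l-1$ trailing ones) is a concrete choice, and by Lemma~\ref{lem:necconds}(a) the row count is an invariant of its equivalence class in either $\bigP{N}$ or $\Supp{N}$. Since both posets are finite, starting from this element I can ascend within the subposet of connected elements to reach a maximal connected element $[R]$ (resp.\ $\eqsupp{R}$).

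The remaining task is to verify that this maximal element still has $l$ rows. First, Proposition~\ref{pro:ribbons_only} forces $R$ to be a ribbon: otherwise there would be a strictly greater ribbon above $[R]$, contradicting maximality. Thus the starting hook ribbon and $R$ are both ribbons and are comparable by construction, so the contrapositive of Lemma~\ref{lem:incomparable} forces them to share the same number of rows, namely $l$. The argument applies to $\Supp{N}$ verbatim, since both Proposition~\ref{pro:ribbons_only} and Lemma~\ref{lem:incomparable} are stated uniformly for the two posets. I do not foresee any genuine obstacle here: the corollary is essentially a repackaging of Lemma~\ref{lem:incomparable} once one uses Proposition~\ref{pro:ribbons_only} to guarantee that the top of an ascending chain starting from an $l$-row ribbon is itself a ribbon.
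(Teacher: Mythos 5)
Your argument is correct and is essentially the paper's intended one: the paper states the corollary as an immediate consequence of Lemma~\ref{lem:incomparable} together with the reductions of that section (maximal connected elements lie above a given $l$-row ribbon and must themselves be ribbon classes by Proposition~\ref{pro:ribbons_only} and Lemma~\ref{lem:necconds}, hence have $l$ rows by incomparability). You have simply filled in the details the paper leaves implicit, so there is nothing to add.
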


While we now have all the foundation we need to work on proving Theorem~\ref{thm:main} in the next section, we can say a bit more specifically about $\bigP{N}$ that is relevant to Conjecture~\ref{con:max}.

\begin{remark}\label{rem:nemeses}
Corollary~\ref{cor:full_reduction} 
gives significant information about the maximal connected elements in $\bigP{N}$.  However it doesn't, for example, explain why $[\rib{323}] \leqs [\rib{233}]$.  Although Question~\ref{que:schurpos} remains open, we offer some methods to further reduce the possibilities for the maximal elements.  

Suppose a ribbon $A$ has all rows of lengths $a$ and $a+1$.  If the top or bottom row of $A$ has length $a+1$ while the adjacent row has length $a$, then \cite[Cor.\ 2.10]{KWvW08} tells us that $[A] \les [R]$, where $R$ is obtained from $A$ by switching the lengths of the two rows in question.  This shows, for example, that $[\rib{323}] \leqs [\rib{233}]$.  

The next simplest inequality unexplained by the methods described so far is $[\rib{23222}] \leqs [\rib{22322}]$.  Using a well-known skew Schur function identity and \cite[Thm.~5]{LPP07}, we can explain this inequality and any inequality necessary for Conjecture~\ref{con:max} with $N \leq 33$.  The general idea is like that in the previous paragraph where we switch adjacent row lengths $a$ and $a+1$ under the right conditions, except that now the switch doesn't have to take place at either end of the ribbon.   
Since our technique doesn't work in every case, we will only sketch one example.  Consider the difference $f = \ribschur{22322} - \ribschur{23222}$.  We have the identity $\ribschur{22}\ribschur{322} = \ribschur{22322} + \ribschur{2522}$ \cite[\S~169]{Mac04} and similarly $\ribschur{222}\ribschur{32} = \ribschur{23222} + \ribschur{2522}$.  Thus $f = \ribschur{22}\ribschur{322} - \ribschur{222}\ribschur{32}$.  Observe that we have written a difference of connected skew Schur functions as a difference of products of skew Schur functions, making the results of \cite{LPP07} applicable.  Applying the $\wedge$ and $\vee$ operations of \cite{LPP07} to $\ribschur{222}\ribschur{32}$ written as $s_{432/21} s_{53/21}$
yields $\ribschur{22}\ribschur{322}$.  By \cite[Thm.\ 5]{LPP07}, $f$ is Schur-positive, as desired.

The simplest inequality unexplained by all our methods involves $N=34$ with 14 rows: 
\[
[\rib{23232233223232}] \leqs [\rib{22323232323232}].
\]
\end{remark}

\section{Maximal support}\label{sec:fullsupport}

In this section, we prove Theorem~\ref{thm:main}.  Our first step is to reduce our task to proving Theorem~\ref{thm:fullsupport} below, which is the heart of the proof.  As mentioned at the end of the Introduction, we claim that proving Theorem~\ref{thm:main}(c) will suffice to prove parts (a) and (b).  Indeed, suppose (c) is true, i.e., a partition $\lambda$ is in the support of an equitable ribbon $R$ if and only if $|\lambda|=|R|$ and $\lambda$ has no more nonempty rows or columns than $R$.  When we consider maximal connected elements, we showed in Corollary~\ref{cor:full_reduction} that we can restrict our attention to ribbons.  By Lemma~\ref{lem:incomparable},  we can consider those ribbons with $l$ rows separately from those with any other number of rows.  Thus, let us fix $N$ and the number of rows $l$.  Note that, for ribbons, this also fixes the number of columns as $N-l+1$.  

Corollary~\ref{cor:onemaximal} gives that $\Supp{N}$ contains at least one maximal connected element with $l$ rows.  Moreover, by Corollary~\ref{cor:full_reduction}, we know that if $\eqsupp{R}$ is a maximal connected element of $\Supp{N}$, then $R$ is an equitable ribbon.   By Theorem~\ref{thm:main}(c), all equitable ribbons $R$ with $l$ rows  have the same support.  Therefore, all equitable ribbons with $l$ rows must constitute the same maximal element $\eqsupp{R}$ of $\Supp{N}$.  Thus (b) holds.  This also implies that $\Supp{N}$ contains exactly one maximal connected element with $l$ rows, namely $\eqsupp{R}$, and so (a) holds.

It remains to prove Theorem~\ref{thm:main}(c).  If fact, the ``only if'' direction of Theorem~\ref{thm:main}(c) is easy to check.  Let $R$ be an equitable ribbon of size $N$.  If  $\lambda \in \supp{R}$ then we know $|\lambda|=N$.   By Lemma~\ref{lem:extreme_fillings}(a), we have that $\rows{R} \domleq \lambda$, which implies that $\ell(\lambda) \leq \ell(\rows{R})$.  
We also have $\lambda_1 \leq \ell(\cols{R})$, since the inequality  $\lambda \domleq \cols{R}^t$ from Lemma~\ref{lem:extreme_fillings}(a) gives $\lambda_1 \leq (\cols{R}^t)_1 = \ell(\cols{R})$.

Therefore, to prove Theorem~\ref{thm:main}, it remains to show the following result.

\begin{theorem}\label{thm:fullsupport}
Let $R$ be an equitable ribbon with $|R|=N$.  If $\lambda$ with $|\lambda|=N$
has no more rows or columns than $R$, then $\lambda \in \supp{R}$.  
\end{theorem}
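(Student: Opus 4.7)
The plan is to invoke Gessel's expansion (Theorem~\ref{thm:ribexpansion}), which says that $\lambda \in \supp{R}$ precisely when there exists a standard Young tableau of shape $\lambda$ whose descent set equals $S(\alpha)$, where $\alpha$ is the row-length composition of $R$.  So our task reduces to a purely combinatorial existence question.  Writing $N = al + r$ with $0 \le r < l$, every equitable composition $\alpha$ with $l$ parts consists of $r$ parts equal to $a+1$ and $l-r$ parts equal to $a$.  Since all equitable ribbons with $l$ rows will ultimately be support-equivalent (which is what Theorem~\ref{thm:main}(b) asserts), I have the freedom to choose the particular equitable $\alpha$ depending on $\lambda$, and it suffices to exhibit, for each valid $\lambda$, an SYT of shape $\lambda$ with descent set $S(\alpha)$ for some such $\alpha$.

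First I would encode the desired SYT by its row word $r_1 r_2 \cdots r_N$, where $r_j$ is the row occupied by the entry $j$.  The requirements are: (i) row $k$ occurs exactly $\lambda_k$ times among the $r_j$; (ii) $r_{j+1} > r_j$ exactly when $j \in S(\alpha)$, and $r_{j+1} \le r_j$ otherwise; and (iii) the partial row counts $T_r(j) := |\{i \le j : r_i = r\}|$ form a weakly decreasing sequence in $r$ for every $j$, which is the translation of column-strictness for SYTs.  Conditions (i) and (ii) are easy to impose by partitioning $\{1, \ldots, N\}$ into $l$ blocks of sizes $\alpha_1, \ldots, \alpha_l$ and assigning weakly-decreasing row indices within each block, with a strict jump downward at each block boundary.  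The substantive content is (iii), and this is where equitability of $\alpha$ becomes crucial.

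The main obstacle is arranging (iii).  My approach would be induction on $l$.  The base case $l = 1$ is trivial since then $\lambda = (N)$ and $R = (N)$.  For the inductive step, I would peel off the first $\alpha_1$ entries of $R$: the block $\{1, \ldots, \alpha_1\}$ must occupy a horizontal strip of size $\alpha_1$ in the top of $\lambda$, with the last entry of this strip sitting strictly above the first entry of the next block.  Removing this strip from $\lambda$ and the top row from $R$ leaves a reduced ribbon $R'$ with $l-1$ rows, $m - \alpha_1 + 1$ columns, and $N - \alpha_1$ boxes, and a residual shape $\lambda'$ that must lie in the $(l-1)\times(m - \alpha_1 + 1)$ rectangle.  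By choosing $\alpha_1 \in \{a, a+1\}$ and the horizontal strip strategically (matching larger parts of $\alpha$ to wider portions of $\lambda$), I expect $\lambda'$ to satisfy the inductive hypothesis for $R'$, and reassembling gives the desired SYT.  The delicate point is that some valid choice of $\alpha_1$ and strip exists for every $\lambda$; this likely requires case analysis on whether $\ell(\lambda) = l$ or $\ell(\lambda) < l$ and on whether $\lambda_1 = m$ or $\lambda_1 < m$, with the extremal hook $\lambda = (m, 1^{l-1})$ handled separately as a base configuration.
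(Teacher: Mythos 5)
There is a genuine gap, and it comes right at the start: you give yourself ``the freedom to choose the particular equitable $\alpha$ depending on $\lambda$'' on the grounds that all equitable ribbons with $l$ rows are support-equivalent. But that equivalence is Theorem~\ref{thm:main}(b), which in the paper's logical structure is \emph{deduced from} Theorem~\ref{thm:fullsupport}; nothing proved before this point (Lemma~\ref{lem:necconds} gives only necessary conditions, and the results quoted from \cite{KWvW08} compare non-equitable ribbons with equitable ones) tells you that two different equitable ribbons with the same number of rows have the same support. The statement you must prove fixes an arbitrary equitable ribbon $R$, so exhibiting an SYT with descent set $S(\alpha)$ for \emph{some} conveniently chosen equitable $\alpha$ only shows $\lambda\in\supp{R'}$ for some equitable $R'$, not $\lambda\in\supp{R}$; as written your plan is circular. (The reduction to Gessel's expansion, Theorem~\ref{thm:ribexpansion}, is the right first move and matches the paper, but you must then work with the given $\alpha$, up to the harmless symmetries $R\mapsto R^t$ and $R\mapsto\flip{R}$.)

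There is also a structural problem with the induction as you set it up. If you peel off the \emph{first} block $\{1,\ldots,\alpha_1\}$, then because the boxes containing $1,\ldots,\alpha_1$ must form both a horizontal strip and an order ideal of $\lambda$, they are forced to be the leftmost $\alpha_1$ boxes of row $1$; removing them leaves the \emph{skew} shape $\lambda/(\alpha_1)$, not a partition, so your induction hypothesis (stated for straight shapes fitting in a rectangle) does not apply to the residual shape. The paper avoids this by peeling off the \emph{last} row $\alpha_l$, so that the removed block is a horizontal strip on the outer rim and the remainder $\mu$ is a straight shape. Even then, the hard part is exactly the point you wave at with ``choosing the horizontal strip strategically'': the descent at the junction (the paper's $N_{l-1}$) depends on where the inductively supplied tableau of shape $\mu$ places its largest entry, which you cannot control when you choose the strip. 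The paper handles this with a nontrivial correction step, permuting entries by an explicit cycle (with a separate, more delicate argument when $\lambda$ is a rectangle) and verifying that the result is still an SYT with the required descent set, together with dimension estimates using equitability (e.g.\ inequalities such as \eqref{equ:forcontradiction}--\eqref{equ:onelastineq}). Your sketch contains neither this correction mechanism nor a strengthened induction hypothesis that would make it unnecessary, so the inductive step does not go through as described.
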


Not only will this suffice to prove Theorem~\ref{thm:main}, it also shows the nice result that the support of an equitable ribbon $R$ consists of all those partitions of the appropriate size that fit inside the same size rectangle as $R$.  In other words, the support of an equitable ribbon is as large as it can possibly be.

\begin{proof}[Proof of Theorem~\ref{thm:fullsupport}]
Let $\alpha$ denote the composition of row lengths of $R$ from top to bottom, and abbreviate $\ell(\alpha)$ by $l$.  By Theorem~\ref{thm:ribexpansion}, we wish to show that there exists an SYT $T$ of shape $\lambda$ and descent set $S(\alpha)$, for all $\lambda$ satisfying the hypotheses of the theorem.  We will proceed by induction on $l$, noting that the theorem is trivially true when $l=1$.  Assuming $l\geq 2$, our induction hypothesis is that if $\mu \vdash N-\alpha_l$ has at most $l-1$ rows and at most $N-\alpha_l - (l-1)+1$ columns, then there exists an SYT $T'$ of shape $\mu$ with descent set $S(\alpha_1, \ldots, \alpha_{l-1})$.  We will first show some restrictions that can be placed on $R$ and $\alpha$.  Then we will show that a horizontal strip $\lambda/\mu$ can be removed from $\lambda$ in such a way that $\mu$ satisfies the dimension requirements of the induction hypothesis.  We will fill the subpartition $\mu$ of $\lambda$ with $T'$ and the horizontal strip $\lambda/\mu$ with the numbers $N-\alpha_l+1, \ldots, N$.  While the resulting SYT may not have descent set $S(\alpha)$, we will show that a permutation of the entries will yield an SYT of shape $\lambda$ and descent set $S(\alpha)$.  

\subsection*{Restrictions on $R$ and $\alpha$} Since $R$ is equitable, there exists a positive integer $a$ (resp.~$b$) such that all rows (resp.~columns) of $R$ have length $a$ or $a+1$ (resp.\ $b$ or $b+1$), with at least one row having length $a$ (resp.\ $b$).  It will be helpful to restrict to the case where $a \geq b$.  
If this is not the case, then we can apply the $\omega$ involution of \eqref{equ:omega} and work with $R^t$ instead: $\lambda$ has no more rows and columns than $R$ if and only if $\lambda^t$ has no more rows and columns than $R^t$, and $\lambda \in \supp{R}$ if and only if $\lambda^t \in \supp{R^t}$.

Before proceeding, it is worth taking note of some restrictions that can be placed on $\alpha$ when $a=1$.  Since $b \leq a$, we are in the situation of Example~\ref{exa:lengthone}, implying that all rows of $R$ have length 2, except that exactly one of the top and bottom rows of $R$ has length 1.  If $\alpha_l=1$ while $\alpha_1=2$, then we can work with $\flip{R}$ instead of $R$, exploiting the facts that $\supp{\flip{R}} = \supp{R}$ and that $\flip{R}$ has the same number of rows and columns as $R$.  We conclude that we can restrict to the case when the only row of $R$ that can possibly have length 1 is the top row.  In other words, 
\begin{equation}\label{equ:rowlengths}
\alpha_i>1 \mbox{ for } 2\leq i\leq l.
\end{equation}

\subsection*{Constructing the horizontal strip $\lambda/\mu$} It will be helpful to denote the elements of $S(\alpha)$ by $N_1, N_2, \ldots, N_{l-1}$ in increasing order.  
Starting with an empty Young diagram of shape $\lambda$, we wish to appropriately insert the entries $N_{l-1}+1, N_{l-1}+2, \ldots, N$ into $\lambda$ so that the remaining empty boxes form a Young  diagram of shape $\mu$, where $\mu$ satisfies the dimension conditions of the induction hypothesis.  So that none of $N_{l-1}+1, N_{l-1}+2, \ldots, N$ will be a descent, $\lambda/\mu$ must be a horizontal strip.  Thus let us first check that $\lambda$ has at least $N-N_{l-1}=\alpha_l$ columns.  The first case is that $\alpha_l=a$.  If $\lambda_1\leq \alpha_l-1$, then $|R| = |\lambda| \leq l(a-1)$, since $\lambda$ is a partition with at most $l$ rows.  But then $R$ must have a row of length less than $a$, which is a contradiction.  On the other hand, if $\alpha_l=a+1$ and $\lambda_1\leq \alpha_l-1$, we deduce that $R$ has all rows of length $a$, again a contradiction as $\alpha _l = a+1$.  Thus $\lambda$ has at least $\alpha_l$ columns.  

We must next show that there is a way to choose this horizontal strip $\lambda/\mu$ so that $\mu$ satisfies the dimension requirements of the induction hypothesis.   See Figure~\ref{fig:mainproof} for a schematic representation of the situation, with $\lambda$ corresponding to the shaded region.
\begin{figure}[htbp]
\begin{center}
\begin{tikzpicture}[scale=0.9]
\filldraw[fill=blue!20!white] (0,0) -- (2,0) -- (2,0.4) -- (3,1.8) -- (4,2) -- (6,3.6) -- (6.6,3.6) -- (6.6,4) -- (0,4) -- (0,0);
\draw (0,0) rectangle (7,4);
\draw (0,0.4) rectangle (5,4);
\draw[densely dashed] (5,3.2) rectangle (5.4,3.6);
\draw (1,0.2) node {$r$};
\draw (0.5,2) node {$l-1$};
\draw (3 ,4.3) node {$N-l+1$};
\draw (3,3.65) node {$N_{l-1}-(l-1)+1$};
\draw (5.8,3.8) node {$c$};
\draw (5.2,3.4) node {$x$};
\draw (7.2,2) node {$l$};
\end{tikzpicture}
\end{center}
\caption{The dimensions in the proof of Theorem~\ref{thm:fullsupport}, with $\lambda$ shaded.}
\label{fig:mainproof}
\end{figure}
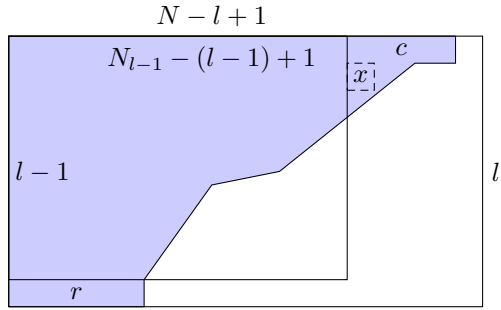
Since $\lambda/\mu$ must be a horizontal strip, we require that all $c$ columns to the right of column $N_{l-1}-(l-1)+1$ have at most one box.  It suffices to show that $\lambda$ does not have a box in row 2 and column $N_{l-1}-(l-1)+2$, marked with an $x$ in the figure.  So suppose $\lambda$ has a box in that location $x$.  Then $N = |\lambda| \geq 2(N_{l-1}-(l-1)+2) = 2(N-\alpha_l-l+3)$.  Thus 
\begin{equation}\label{equ:forcontradiction}
N-2\alpha_l-2l+6 \leq 0.
\end{equation}
We consider the cases $\alpha_l=a$ and $\alpha_l=a+1$ separately.  If $\alpha_l=a$, then $N \geq la$ and \eqref{equ:forcontradiction} implies $la-2a-2l+6 \leq 0$, which can be rewritten as 
\begin{equation}\label{equ:impossible}
(l-2)(a-2)+2 \leq 0. 
\end{equation}
Since we are assuming $l\geq2$, we know by \eqref{equ:rowlengths} that $a=\alpha_l \geq 2$.   Thus \eqref{equ:impossible} gives a contradiction.  If $\alpha_l=a+1$, then $N\geq la+1$ and \eqref{equ:forcontradiction} implies 
\[
(l-2)(a-2)+1 \leq 0.  
\]
If $a\geq2$, then we have our necessary contradiction.  If $a=1$, then by~\eqref{equ:rowlengths}, we must have $\alpha=(1,2,2,\ldots,2)$, and so $N=2l-1$.  Plugging this value of $N$ into \eqref{equ:forcontradiction} yields a contradiction with $\alpha_l=2$.  Therefore, both when $\alpha_l=a$ and $\alpha_l=a+1$, we have shown that there is no box of $\lambda$ at position $x$.  

We can now state our rule for constructing $\lambda/\mu$.  We need that $\mu$ has at most $l-1$ rows and at most $N_{l-1}-(l-1)+1$ columns.  Therefore, whenever they exist, $\lambda/\mu$ will include the $c$ boxes to the right of column $N_{l-1}-l+2$, and the $r$ boxes in row $l$.  We will also require that $\lambda/\mu$ include at least one box from the bottom row of $\lambda$ and, if $r =0$, that $\lambda/\mu$ include a box from the rightmost column.  The remaining entries of $\lambda/\mu$ can be chosen arbitrarily so that $\lambda/\mu$ is a horizontal strip.  Our next task is to show that all this can be done with just the $\alpha_l$ boxes that $\lambda/\mu$ is allowed to have.  
\begin{itemize}
\item If $r=c=0$ then, by \eqref{equ:rowlengths}, there are no difficulties: $\lambda/\mu$ will take one box from the bottom row of $\lambda$, a box (possibly the same) from the rightmost column, and choose the remaining boxes so that $\lambda/\mu$ will be a horizontal strip.
\item Suppose $r>0$ and $c=0$.  We require that $r \leq \alpha_l$.  If $r > \alpha_l$, then $|\lambda| \geq l(\alpha_l+1)$.  But we know that $N= |\lambda| \leq l(a+1)-1$, so we get a contradiction because $\alpha_l\geq a$.  
\item Suppose $r=0$ and $c>0$.  We have $c \leq N-l+1-(N_{l-1}-(l-1)+1)$ and, since $N_{l-1}=N-\alpha_l$, we have $c \leq \alpha_l-1$.  Therefore, $\lambda/\mu$ will be able to include these $c$ boxes and still be able to take at least one box from the bottom row of $\lambda$.  Because $c>0$, it will have automatically have included a box from the rightmost column.
\item Finally, suppose $r, c >0$.  We require that $r+c \leq \alpha_l$.  Since $r, c >0$, we have $N=|\lambda| \geq r(l-1)+(N_{l-1}-(l-1)+1)+c$.  Replacing $N_{l-1}$ by $N-\alpha_l$ we deduce that $l(r-1)-r+c-\alpha_l+2 \leq 0$.  Using $l \geq 2$ with $r\geq 1$, we get that $2(r-1)-r+c-\alpha_l+2 \leq 0$, which simplifies to $r+c \leq \alpha_l$, as required.  Since $r>0$, $\lambda/\mu$ will automatically include a box from the bottom row.
\end{itemize}
We conclude that a horizontal strip $\lambda/\mu$ of size $\alpha_l$ can be chosen so that $\mu$ satisfies the dimension requirements of the induction hypothesis, and so that $\lambda/\mu$ includes at least one box in the bottom row of $\lambda$ and, if $r = 0$, at least one box from the rightmost column of $\lambda$.

\subsection*{Filling $\lambda$ with an SYT} We can now designate an SYT $T$ of shape $\lambda$.  Considering $\mu$ and $\lambda/\mu$ as subsets of the boxes of $\lambda$, we fill the boxes of $\mu$ with $T'$ of the induction hypothesis, and those of $\lambda/\mu$ with $N_{l-1}+1, N_{l-1}+2, \ldots, N$ from left to right.  
By the induction hypothesis, the descent set of $T$ includes $N_1, N_2, \ldots, N_{l-2}$.  It may or may not include $N_{l-1}$ and, since $\lambda/\mu$ is a horizontal strip, the descent set of $T$ includes no numbers not of the form $N_j$ for some $j$.  If the entry $N_{l-1}$ is on a higher row than the entry $N_{l-1}+1$ in $T$, then $T$ has the required descent set $\{N_1, \ldots, N_{l-1}\}=S(\alpha)$ and we have proved the theorem.  Our final task is to assume that $N_{l-1}$ is not a descent in $T$, and show that we can then permute the entries of $T$ so that it continues to be an SYT but so that its descent set becomes $\{N_1, \ldots, N_{l-1}\}$.  

\begin{example}\label{permuteentries:exa}
To illustrate the construction of $T$ and the necessary permutations, consider $\alpha=333333$, so that $N=18$, $l=6$, $N_{l-1}=15$ and the entries of $\lambda/\mu$ will be 16, 17 and 18.    

First consider $\lambda=(10,4,4)$.  An SYT $T$ that would be consistent with our construction above would be
\setlength{\unitlength}{5mm}
\[
\begin{picture}(10,3)(0,0)
\put(0,3){\line(1,0){8}}
\put(0,2){\line(1,0){8}}
\put(0,1){\line(1,0){4}}
\put(0,0){\line(1,0){3}}
\multiput(0,3)(1,0){4}{\line(0,-1){3}}
\put(4,3){\line(0,-1){2}}
\multiput(5,3)(1,0){4}{\line(0,-1){1}}
\multiput(3.1,0)(0.2,0){5}{\line(1,0){0.1}}
\multiput(8.1,2)(0.2,0){10}{\line(1,0){0.1}}
\multiput(8.1,3)(0.2,0){10}{\line(1,0){0.1}}
\multiput(4,0.9)(0,-0.2){5}{\line(0,-1){0.1}}
\multiput(9,2.9)(0,-0.2){5}{\line(0,-1){0.1}}
\multiput(10,2.9)(0,-0.2){5}{\line(0,-1){0.1}}
\put(0.3,2.25){1}
\put(1.3,2.25){2}
\put(2.3,2.25){$\overline{3}$}
\put(3.3,2.25){$\overline{6}$}
\put(4.3,2.25){8}
\put(5.3,2.25){$\overline{9}$}
\put(6.15,2.25){11}
\put(7.15,2.25){$\overline{12}$}
\put(8.15,2.25){17}
\put(9.15,2.25){18}
\put(0.3,1.25){4}
\put(1.3,1.25){5}
\put(2.3,1.25){7}
\put(3.15,1.25){10}
\put(0.15,0.25){13}
\put(1.15,0.25){14}
\put(2.15,0.25){15}
\put(3.15,0.25){16}
\end{picture}
\]
where the descents are shown with bars and the boxes of $\lambda/\mu$ are dashed.  Since 15 is not a descent in this SYT, we apply a permutation to 15, 16 and 17 to get
\setlength{\unitlength}{5mm}
\[
\begin{picture}(10,3)(0,0)
\put(0,3){\line(1,0){8}}
\put(0,2){\line(1,0){8}}
\put(0,1){\line(1,0){4}}
\put(0,0){\line(1,0){3}}
\multiput(0,3)(1,0){4}{\line(0,-1){3}}
\put(4,3){\line(0,-1){2}}
\multiput(5,3)(1,0){4}{\line(0,-1){1}}
\multiput(3.1,0)(0.2,0){5}{\line(1,0){0.1}}
\multiput(8.1,2)(0.2,0){10}{\line(1,0){0.1}}
\multiput(8.1,3)(0.2,0){10}{\line(1,0){0.1}}
\multiput(4,0.9)(0,-0.2){5}{\line(0,-1){0.1}}
\multiput(9,2.9)(0,-0.2){5}{\line(0,-1){0.1}}
\multiput(10,2.9)(0,-0.2){5}{\line(0,-1){0.1}}
\put(0.3,2.25){1}
\put(1.3,2.25){2}
\put(2.3,2.25){$\overline{3}$}
\put(3.3,2.25){$\overline{6}$}
\put(4.3,2.25){8}
\put(5.3,2.25){$\overline{9}$}
\put(6.15,2.25){11}
\put(7.15,2.25){$\overline{12}$}
\put(8.15,2.25){$\overline{15}$}
\put(9.15,2.25){18}
\put(0.3,1.25){4}
\put(1.3,1.25){5}
\put(2.3,1.25){7}
\put(3.15,1.25){10}
\put(0.15,0.25){13}
\put(1.15,0.25){14}
\put(2.15,0.25){16}
\put(3.15,0.25){17}
\put(10.3,0){{,}}
\end{picture}
\]
which has the required descent set.

The more complicated case will be when $\lambda$ is a rectangle.  Suppose $\lambda=99$, in which case our construction could give
\setlength{\unitlength}{5mm}
\[
\begin{picture}(8,2)(0,0)
\put(0,2){\line(1,0){9}}
\put(0,1){\line(1,0){9}}
\put(0,0){\line(1,0){6}}
\multiput(0,2)(1,0){7}{\line(0,-1){2}}
\multiput(7,2)(1,0){3}{\line(0,-1){1}}
\multiput(6.1,0)(0.2,0){15}{\line(1,0){0.1}}
\multiput(7,0.9)(0,-0.2){5}{\line(0,-1){0.1}}
\multiput(8,0.9)(0,-0.2){5}{\line(0,-1){0.1}}
\multiput(9,0.9)(0,-0.2){5}{\line(0,-1){0.1}}
\put(0.3,1.25){1}
\put(1.3,1.25){2}
\put(2.3,1.25){$\overline{3}$}
\put(4.3,1.25){$\overline{6}$}
\put(5.3,1.25){8}
\put(6.3,1.25){$\overline{9}$}
\put(7.15,1.25){11}
\put(8.15,1.25){$\overline{12}$}
\put(5.15,0.25){15}
\put(8.15,0.25){18}
\put(0.3,0.25){4}
\put(3.3,1.25){5}
\put(1.3,0.25){7}
\put(2.15,0.25){10}
\put(3.15,0.25){13}
\put(4.15,0.25){14}
\put(6.15,0.25){16}
\put(7.15,0.25){17}
\end{picture}
\]
where 15 is not a descent.
In this case, we have to apply a permutation to the entries 11--15 to get
\[
\begin{picture}(8,2)(0,0)
\put(0,2){\line(1,0){9}}
\put(0,1){\line(1,0){9}}
\put(0,0){\line(1,0){6}}
\multiput(0,2)(1,0){7}{\line(0,-1){2}}
\multiput(7,2)(1,0){3}{\line(0,-1){1}}
\multiput(6.1,0)(0.2,0){15}{\line(1,0){0.1}}
\multiput(7,0.9)(0,-0.2){5}{\line(0,-1){0.1}}
\multiput(8,0.9)(0,-0.2){5}{\line(0,-1){0.1}}
\multiput(9,0.9)(0,-0.2){5}{\line(0,-1){0.1}}
\put(0.3,1.25){1}
\put(1.3,1.25){2}
\put(2.3,1.25){$\overline{3}$}
\put(4.3,1.25){$\overline{6}$}
\put(5.3,1.25){8}
\put(6.3,1.25){$\overline{9}$}
\put(3.15,0.25){11}
\put(7.15,1.25){$\overline{12}$}
\put(8.15,1.25){$\overline{15}$}
\put(8.15,0.25){18}
\put(0.3,0.25){4}
\put(3.3,1.25){5}
\put(1.3,0.25){7}
\put(2.15,0.25){10}
\put(4.15,0.25){13}
\put(5.15,0.25){14}
\put(6.15,0.25){16}
\put(7.15,0.25){17}
\end{picture}
\]
with the required descent set.
\end{example}

Returning to the proof, if $t_1, t_2, \ldots, t_k$ are entries of $T$, let $(t_1, t_2, \ldots, t_k)$ denote the permutation of the entries of $T$ that behaves like the usual cycle notation: $t_1$ is sent the the box containing $t_2$, while $t_2$ is sent to the box containing $t_3$, and so on, with $t_k$ sent to the box containing $t_1$.  

\subsection*{Correcting the filling: the non-rectangle case}   The easier case is when $\lambda$ is not a rectangle.  Suppose first that $N$ is on a higher row than $N_{l-1}+1$.  Let $N'$ denote the lowest number greater than $N_{l-1}+1$ that is on a higher row than $N_{l-1}+1$, and suppose $N'$ is on row $k$.  Apply the cycle 
\[
(N', N'-1, \ldots, N_{l-1}+1, N_{l-1})
\]
to the entries of $T$ to get a tableau $T'$.  In Example~\ref{permuteentries:exa}, this cycle is $(17,16,15)$. Since $N_{l-1}$ is not a descent in $T$ and $N_{l-1}+1, N_{l-1}+2, \ldots, N$ form a horizontal strip, the entries of the cycle form a horizontal strip.  Every entry of this horizontal strip has no box below it and, except in row $k$, these entries appear at the right end of their rows, possibly with other entries of the cycle.  Each cycle entry, except in row $k$, is replaced by the number which is one bigger than it.  As a result, $T'$ will be an SYT, except possibly because of inequalities violated by $N_{l-1}$.  A problem would be created at $N_{l-1}$ in $T'$ if and only if some number strictly between $N_{l-1}$ and $N'$ were immediately to the left or above $N'$ in $T$.  However, by definition of $N'$, no such number exists.  Thus $T'$ is an SYT.  All entries of the cycle except $N_{l-1}$ maintain their relative left-to-right ordering, and $N'$ will be strictly lower in $T'$ than in $T$.  Thus each entry of the cycle except possibly $N_{l-1}$ will maintain its property of not being a descent.  Moreover, $N_{l-1}-1$ will still not be a descent since $N_{l-1}$ has moved to a higher row while $N_{l-1}-1$ has stayed in the same place.  
Since $N_{l-1}$ was not a descent in $T$, $N'$ is on a higher row than both $N_{l-1}+1$ and $N_{l-1}$ in $T$.  Thus $N_{l-1}$ is on a higher row than $N_{l-1}+1$ in $T'$, and hence $N_{l-1}$ is a descent in $T'$, as required.  

We next suppose that $N$ is not on a higher row than $N_{l-1}+1$.  In this case, we will show that $\lambda$ is a rectangle, and consider the rectangle case in the subsequent portion of the proof.  By definition of $\lambda/\mu$ and the filling $T$, since these entries $N$ and $N_{l-1}+1$ are both in $\lambda/\mu$, they must both be on the bottom row.  Thus the entries $N_{l-1}+1, N_{l-1}+2, \ldots, N$ appear in order at the right end of the bottom row of $\lambda$.  If $r$ (from Figure~\ref{fig:mainproof}) equals $\alpha_l$, then these entries $N_{l-1}+1, N_{l-1}+2, \ldots, N$ will completely fill the bottom row of $\lambda$.  This contradicts our assumption that $N_{l-1}$ is not a descent in $T$.  Since we know that $r \leq \alpha_l$, we deduce that $r<\alpha_l$, and so $r=0$ for $N_{l-1}+1, N_{l-1}+2,\ldots, N$ to all be in the bottom row of $\lambda$. Then our construction of $\lambda/\mu$ implies that it includes a box from the rightmost column of $\lambda$.  By the definition of $T$, we conclude that the entry $N$ appears in the rightmost column of $\lambda$.  Thus $N$ is both in the rightmost column and bottom row of $\lambda$, implying that $\lambda$ is a rectangle, the case we consider next.  

\subsection*{Correcting the filling: the rectangle case}  Suppose $\lambda = (m^k)$.  Since $\lambda/\mu$ is a horizontal strip, the entries $N_{l-1}+1, N_{l-1}+2, \ldots, N$ appear in order at the right end of the bottom row of $\lambda$.  Since $N_{l-1}$ is not a descent and the   highest descent is $N_{l-2}$, the entries $N_{l-2}+1, N_{l-2}+2, \ldots, N_{l-1}$ all appear in order on the bottom row of $\lambda$, next to $N_{l-1}+1$.  Since $N_j$ is a descent for $j \leq l-2$, it appears above the bottom row of $\lambda$.  Find the largest entry not of the form $N_j$ that appears above the bottom row.  Such an entry must exist since $\lambda$ is a partition, since~\eqref{equ:rowlengths} holds, and since all $\alpha_l$ entries from $N_{l-1}+1$ to $N$ went in the bottom row of $\lambda$. Because entries $N_{j-1}+1, N_{j-1}+2, \ldots, N_j$ form a horizontal strip in $T$, the largest entry not of the form $N_j$ that appears above the bottom row must be $N_j-1$, for some $j \leq l-2$.  Fixing this latter $j$, suppose $N_j-i$ is the smallest entry greater than $N_{j-1}$ that also appears above the bottom row, implying that $N_j-i, N_j-i+1, \ldots, N_j-1, N_j$ all appear above the bottom row.  We can now specify the permutation to apply to the entries of $T$, namely
\begin{multline*}
(N_j+1, N_j+2, \ldots, \widehat{N_{j+1}}, \ldots, \widehat{N_{j+2}}, \ldots, \widehat{N_{l-2}}, N_{l-2}+1, N_{l-2}+2, \ldots, N_{l-1},\\ N_{l-2}, N_{l-3}, \ldots, N_{j+1}, N_j, N_j-1, N_j-2, \ldots, N_j-i)
\end{multline*}
where, as usual, a hat denotes omission.  In words, the cycle starts at $N_j+1$ and goes all the way along the bottom row $k$ of $\lambda$ as far as $N_{l-1}$, then jumps up to $N_{l-2}$ which, because of its size, is the rightmost entry on row $k-1$.  It stays above the bottom row, picking up the biggest entries, namely those of the form $N_n$ for $j \leq n \leq l-2$, followed by the important horizontal strip $N_j-1, N_j-2 \ldots, N_j-i$. The cycle returns back to the bottom row at $N_j+1$.  In Example~\ref{permuteentries:exa}, this cycle is $(13,14,15,12,11)$.
As before, let $T'$ denote the image of $T$ under this permutation.

By the definition of $N_j-i$, the bottom row of $T'$ will be   increasing.  Restricting our attention to the entries above the bottom row, we get that each is replaced by the next biggest entry or stays the same, implying that the relevant inequalities will be preserved.  Note that $N_{l-1}$ in $T'$ is at the rightmost box in row $k-1$, and so does not violate any inequalities.  It remains to show that the inequalities between the bottom row and the row above it are valid in $T'$.  We will do this by showing, roughly, that the elements above the bottom row  have not moved ``too far to the left.''  More precisely, to show that $T'$ is an SYT, it is now sufficient to show every element in row $k-1$ or above is less than the element of the bottom row directly below it.  Moreover, it suffices to show this for entries weakly between $N_j-i$ and $N_{l-1}$, since these are exactly the ones that change places.  First, we have that $N_{l-1}$ is above $N$.  Then by~\eqref{equ:rowlengths}, $N_{l-2}$ is weakly to the right of $N_{l-1}+1$, and so $N_{l-2}$ is less than the element of the bottom row directly below it.  Continuing in this fashion, we have $N_{l-3}, N_{l-4}, \ldots, N_j$ are weakly to the right of $N_{l-2}+1, N_{l-3}+1, \ldots, N_{j+1}+1$, respectively, implying the necessary inequalities.  Next, $N_j-1$ is weakly right of $N_{j+1}-1$, and $N_j-2$ is weakly to the right of $N_{j+1}-2$, all the way up to $N_j-(i-2)$ being weakly to the right of $N_{j+1}-(i-2)$.  Finally, we need that $N_j-(i-1)$ is strictly to the right of $N_j-i$ in $T'$.  To do this, we will show the equivalent fact that, in $T$, $N_j-i$ is strictly to the right of $N_j+1$.  

The situation for $T$ that puts $N_j-i$ as far left as possible is when row $k-1$ of $\lambda$ ends on the right with the sequence
\[
N_j-i, N_j-i+1, \ldots, N_j-2, N_j-1, N_j, N_{j+1}, \ldots, N_{l-2}.
\]
There are $i+(l-2)-j+1 = (l-j)+i-1$ elements in this sequence.  On the other hand, in the bottom row, counting the number of elements in the sequence that starts at $N_j+1$ and goes right to the end of the row, we get $N-(N_j+1)+1-((l-2)-(j+1)+1)$ elements.  This is because all numbers weakly between $N_j+1$ and $N$ are included, except for $N_{j+1}, N_{j+2}, \ldots, N_{l-2}$.  This count of elements in the bottom row simplifies to $N-N_j-(l-j)+2$.  Thus to show that $N_j-i$ is strictly to the right of $N_j+1$ in $T$, we need  
\[
(l-j)+i-1 < N-N_j-(l-j)+2,
\]
or equivalently,
\begin{equation}\label{equ:finalineq}
2(l-j)+i-3 < \alpha_{j+1} + \alpha_{j+2} + \ldots + \alpha_l.
\end{equation}
Since $N_j-i > N_{j-1}$, we know that $ i \leq \alpha_j-1$, and so $ i \leq a$.  Meanwhile, the right-hand side of \eqref{equ:finalineq} is at least $(l-j)a$.  So it suffices to have
\[
2(l-j)+a-3 < (l-j)a,
\] 
which can be written as
\begin{equation}\label{equ:onelastineq}
(l-j-1)(a-2) > -1.
\end{equation}
Now $j\leq l-2$ so $l-j-1\geq 1$.  Therefore, \eqref{equ:onelastineq} holds so long as $a>1$.  If $a=1$, then by \eqref{equ:rowlengths}, the right-hand side of \eqref{equ:finalineq} is at least $(l-j)2$.  So it suffices to have 
\[
2(l-j)+a-3 < 2(l-j),
\]
which is true since $a=1$. We conclude that $T'$ is an SYT.

To work out the descent set of $T'$, the first thing to notice is that the entries $N_{l-1}+1$ and above have remained at the right end of the bottom row, so none of them are descents.  As required, $N_{l-1}$ is a new descent in $T'$ because it appears in row $k-1$.  Since $N_{l-2}, N_{l-3}, \ldots, N_{j+1}, N_j$ remain above the bottom row, they are still descents.  Also, $N_{j-1}$ is still a descent: the only thing to check is the case when $N_{j-1}+1 = N_j-i$, but since $N_j-i$ is moved lower while $N_{j-1}$ remains in place, this descent will be preserved.  All the entries on the bottom row will obviously continue to not be descents.  Therefore, it remains to check that $N_j-1, N_j-2, N_j-i$ are not descents in $T'$.  Since $N_j-2, N_j-3, \ldots, N_j-i$ are not descents in $T$, 
under the cycle, this translates to $N_j-1, N_j-2, \ldots, N_j-i+1$ not being descents in $T'$.  Finally, $N_j-i$ is in the bottom row in $T'$ and so is certainly not a descent.  We conclude that $T'$ is an SYT of descent set $\{N_1, N_2, \ldots, N_{l-1}\} = S(\alpha)$.
\end{proof}

\section{Concluding remarks and open problems}\label{sec:conclusion}

\subsection{Christoffel words}\label{sub:christoffel}
The most obvious open problem is Question~\ref{que:schurpos}, which Conjecture~\ref{con:max} aims to resolve.  Referring to the illustration on the left in Figure~\ref{fig:billiard} as an example, we note that the bold lines corresponding to portion of the border of the ribbon above the diagonal give a Christoffel word (see \cite{BLRS09} and the references therein) when read from right to left.  For the illustration on the right in the same figure, the number of rows is not coprime to the number of columns, so a Christoffel word does not result; instead, the resulting word is a concatenation of three Christoffel words.  One wonders if the theory of combinatorics on words could help resolve Conjecture~\ref{con:max}, perhaps by restating Conjecture~\ref{con:max}(b) in a more accessible way.  

\subsection{Skew shapes with full support}
Theorem~\ref{thm:fullsupport} shows that the support of equitable ribbons $R$ is as big as possible.  More precisely, $\lambda \in \supp{R}$ if and only if $|\lambda|=|R|$ and $\rows{R} \domleq \lambda \domleq \cols{R}^t$.  Another way to think of this is in terms of the dominance lattice for partitions of size $N$: the support of $R$ is the full interval $[\rows{R}, \cols{R}^t]$ in the dominance lattice.

\begin{question}\label{que:fullsupport}
What other skew shapes $A$ have ``full'' support, meaning that $\supp{A}$ is the entire interval $[\rows{A}, \cols{A}^t]$ in the dominance lattice of partitions of size $N$?
\end{question}

The answer seems to be not at all obvious.  For example, the ribbons $\rib{442}$, $\rib{424}$ and $\rib{242}$ all have full support, but $\rib{422}$ does not.  Olga Azenhas, Alessandro Conflitti and Ricardo Mamede \cite{ACM10pr} 
have recently answered Question~\ref{que:fullsupport} in the case when $A$ is \emph{multiplicity-free}, i.e., when $s_A$ is expanded in the basis of Schur functions, all the coefficients are 0 or 1.  

\subsection{The equitable ribbons in $\bigP{N}$}
We know from Corollary~\ref{cor:full_reduction} that answering Question~\ref{que:schurpos} amounts to finding those equitable ribbons that are maximal in $\bigP{N}$.  Since the equitable ribbons play an even more central role in the support case, one might study the subposet of $\bigP{N}$ consisting of the equitable ribbons for its own interest.  A natural question to ask is which equitable ribbons are minimal in  this subposet.

As before, let us fix $N$ and the number of rows $l$, implying that there are $N-l+1$ columns.  If $N$ and $l$ are such that the equitable ribbons will have both a row and column of length 1, this case has already been solved in Example~\ref{exa:lengthone}.  Switching the roles of $l$ and $N-l+1$ if necessary---in effect transposing the ribbons in question---we can therefore restrict our attention to equitable ribbons whose rows all have length at least 2.  We conjecture that the minimal equitable ribbons are then exactly those of the form 
\[
\rib{\underbrace{a+1, a+1, \ldots, a+1}_{r\ \mathrm{copies}}, a, a, \ldots, a, \underbrace{a+1, a+1, \ldots, a+1}_{r'\ \mathrm{copies}}}
\]
where $|r-r'|\leq 1$. Observe that $a=\lfloor\frac{N}{l}\rfloor$ so, for fixed $N$ and $l$, there is exactly one such element up to antipodal rotation.  This conjecture is a special case of the following one.

\begin{conjecture}\label{con:minrib}
Consider the subposet of $\bigP{N}$ consisting of those ribbons $R$ such that $\rows{R} = \lambda$ for some fixed $\lambda \vdash N$.  This subposet has a unique minimal element, namely $[R]$ with 
\[
R = \rib{\lambda_1, \lambda_3, \lambda_5, \ldots, \lambda_{\ell(\lambda)}, \ldots, \lambda_6, \lambda_4, \lambda_2}.
\]
\end{conjecture}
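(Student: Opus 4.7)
The overall strategy is to reduce the conjecture, via Theorem~\ref{thm:ribexpansion}, to a combinatorial inequality on SYT descent-set counts, and then attack that inequality by a chain of Schur-decreasing adjacent-transposition moves on compositions. Writing $\alpha_0 = (\lambda_1,\lambda_3,\lambda_5,\ldots,\lambda_4,\lambda_2)$ for the V-shaped composition defining $R_0$, the conjecture amounts to proving that for every partition $\mu \vdash N$ and every composition $\alpha$ obtained by permuting the parts of $\lambda$,
\[
d_{\mu,\alpha_0} \leq d_{\mu,\alpha},
\]
where $d_{\mu,\alpha}$ denotes the number of SYT of shape $\mu$ with descent set $S(\alpha)$. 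Equivalently, $\ribschur{\alpha} - \ribschur{\alpha_0}$ should be Schur-positive for every such $\alpha$.

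The natural approach is to construct, for each $\alpha$, a sequence $\alpha = \beta^{(0)}, \beta^{(1)}, \ldots, \beta^{(k)} = \alpha_0$ in which each $\beta^{(i+1)}$ is obtained from $\beta^{(i)}$ by an adjacent transposition and each difference $\ribschur{\beta^{(i)}} - \ribschur{\beta^{(i+1)}}$ is Schur-positive. When the swap involves the top two or bottom two rows of the ribbon, exchanges parts $a+1,a$ there, and puts the larger part at the end, \cite[Cor.~2.10]{KWvW08} together with antipodal-rotation invariance $s_R=s_{\flip{R}}$ yields the desired Schur-positivity. Iterating such end swaps should bring $\lambda_1$ to the top and $\lambda_2$ to the bottom; after that I would attempt to induct on $\ell(\lambda)$, since the inner $\ell-2$ rows of $R_0$, with lengths $\lambda_3,\lambda_5,\ldots,\lambda_4$, form precisely the V-shape for the partition $(\lambda_3,\ldots,\lambda_\ell)$.

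The main obstacle is twofold. First, \cite[Cor.~2.10]{KWvW08} as quoted only handles parts differing by $1$, whereas general $\lambda$ may have parts differing by more, so one needs a ``jump'' generalisation of the end-swap lemma; the simplest version would assert that in a ribbon whose top two rows have lengths $q < p$, swapping them to $p, q$ is Schur-decreasing. Second, and more seriously, the induction on $\ell(\lambda)$ requires Schur-decreasing swaps of adjacent rows lying strictly in the \emph{interior} of the ribbon, and removing the outer two rows does not correspond to any clean factorisation of $\ribschur{\alpha}$. Remark~\ref{rem:nemeses} already highlights that no uniform method is known for interior swaps; the ad hoc combination of ribbon-product identities from \cite{Mac04} with the $\wedge,\vee$ operations of \cite[Thm.~5]{LPP07} handles only individual cases (such as $\ribschur{22322} - \ribschur{23222}$) and is known to fail for the dual maximality problem starting at $N=34$. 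I would therefore attempt the interior step in two ways in parallel: (i) searching systematically for product identities of the form $\ribschur{\beta} - \ribschur{\beta'} = s_A s_B - s_C s_D$ to which \cite[Thm.~5]{LPP07} applies; and (ii) trying a direct combinatorial injection of SYT of shape $\mu$ with descent set $S(\alpha_0)$ into those with descent set $S(\alpha)$, by tracking how a single adjacent transposition in $\alpha$ replaces one element $t+\beta^{(i)}_j$ by $t+\beta^{(i)}_{j+1}$ in the descent set. I expect this interior-swap step to be the crux of the argument, and that fully resolving it will require a genuinely new Schur-positivity inequality beyond the currently available tools.
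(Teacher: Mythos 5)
The statement you are addressing is Conjecture~\ref{con:minrib}, which the paper leaves open: no proof is given there, so there is no argument of the authors' to compare yours against. More importantly, what you have written is not a proof but a plan, and by your own admission the plan has an unresolved core. The reduction via Theorem~\ref{thm:ribexpansion} to the inequality $d_{\mu\alpha_0}\leq d_{\mu\alpha}$ is correct, and the end-swap moves justified by \cite[Cor.~2.10]{KWvW08} together with $s_R=s_{\flip{R}}$ are exactly the kind of step the paper itself uses in Remark~\ref{rem:nemeses}. But two essential ingredients are missing and are not supplied by anything cited in the paper: (i) the ``jump'' generalisation of the end-swap lemma for parts differing by more than one, which you assert as the ``simplest version'' but do not prove, and which does not follow from the quoted corollary; and (ii) the interior-swap Schur-positivity needed to sort an arbitrary rearrangement of $\lambda$ into the V-shaped composition, or alternatively to carry out the induction on $\ell(\lambda)$. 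Note also that even granting both kinds of swap, you would still need to exhibit, for every permutation $\alpha$ of the parts of $\lambda$, a sorting sequence in which \emph{every} intermediate transposition is of an allowed (Schur-decreasing) type; this combinatorial sorting argument is not sketched.

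The interior step is precisely where the known technology stops: the paper's Remark~\ref{rem:nemeses} shows that even single instances such as $\ribschur{22322}-\ribschur{23222}$ require ad hoc ribbon-product identities from \cite{Mac04} combined with \cite[Thm.~5]{LPP07}, and that this machinery already fails for an analogous inequality at $N=34$. Your proposed alternatives --- a systematic search for product identities, or a direct descent-set injection --- are reasonable research directions, but neither is carried out, and you yourself state that a ``genuinely new Schur-positivity inequality'' would be needed. So the gap is genuine: the proposal correctly locates the difficulty of the conjecture but does not close it, and as it stands it establishes only the special cases reachable by end swaps of rows differing by one.
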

 Conjecturing a \emph{maximal} element for the subposet of Conjecture~\ref{con:minrib} with general $\lambda$ is a more difficult task.  

\section*{Acknowledgments} We thank Pavlo Pylyavskyy for interesting discussions, including those that resulted in the formulation of Conjecture~\ref{con:max}.
 We are grateful to the anonymous referee for helpful comments.
The Littlewood--Richardson calculator \cite{BucSoftware} and the posets package \cite{SteSoftware} were used for data generation.

\section*{References}

\bibliography{maxsupport}
\bibliographystyle{alpha}

\end{document}